\definecolor{dgreen}{rgb}{0,0.5,0}
\definecolor{brown}{rgb}{0.5,0,0}
\definecolor{magenta}{rgb}{1,0,1}
\definecolor{orange}{rgb}{1,0.4,0}
\def\seq#1#2#3{#1_{#2},\,\ldots,#1_{#3}}
\def\w{\widetilde}
\def\k{\overline}
\def\st{\;|\;}
\def\betab{\k{\beta}}
\def\1{\underline{1}}
\def\ord{\text{ord}}
\def\P{\mathbb P}
\def\N{\mathbb N}
\def\Z{\mathbb Z}
\def\Q{\mathbb Q}
\def\C{\mathbb C}
\def\OO{{\mathcal O}}
\def\DD{{\mathcal D}}
\def\PP{{\mathcal P}}
\def\BB{{\mathcal B}}
\def\CC{{\mathcal C}}
\def\JJ{{\mathcal J}}
\def\SS{{\mathcal S}}
\def\TT{{\mathcal T}}
\newcommand{\pr}{\textstyle\prod\limits}
\newtheorem{theorem}{Theorem}[section]
\newtheorem{statement}[theorem]{Statement}
\newtheorem{corollary}[theorem]{Corollary}
\newtheorem{lemma}[theorem]{Lemma}
\newtheorem{proposition}[theorem]{Proposition}
\theoremstyle{definition}
\newtheorem{definition}[theorem]{Definition}
\newtheorem{remark}[theorem]{Remark}
\newtheorem{example}[theorem]{Example}
\newtheorem{setting}[theorem]{}
\title{On the image of a curve in a normal surface by a  plane projection
\footnote{Math. Subject Class. 14H20, 32S05, 32S15, 32S45,
32S55}}
\author{F.~Delgado \thanks{
Partially supported by grant PID2022-138906NB-C21 funded by
MCIN/AEI/ 10.13039/501100011033 and by ERDF "A way of making Europe".
The author is thankful to the Institut Fourier, Universit\'e de
Grenoble I for hospitality.\newline
Address:
IMUVA (Instituto de Investigaci\'on en Matem\'aticas).
University of Valladolid. Valladolid, Spain.
E-mail: fdelgado\symbol{'100}uva.es.
}
\and H. Maugendre \thanks{Partially supported by the ANR LISA Project ANR-17-CE40-0023.\newline
Address:
Institut Fourier,
Universit\'{e} de Grenoble I, BP 74, F-38402 Saint-Martin
d'H\`{e}res, France.  E-mail: helene.maugendre\symbol{'100}univ-grenoble-alpes.fr}
}
\date{}
\begin{document}
\sloppy
\def\eps{\varepsilon}

\maketitle

\begin{abstract}
We consider a finite analytic morphism $\varphi =(f,g)$ defined from a complex
analytic normal surface $(Z,z)$  to $\C^2$.
We describe the topology of the image by $\varphi$ of a  reduced curve  on
$(Z,z)$ by means of iterated pencils defined recursively for each branch of the curve from the initial one
$\langle f,g \rangle$. This result generalizes the one obtained in a previous paper
for the case in which
$(Z,z)$ is smooth and the curve irreducible.
As a consequence of the methods we can describe also the topological
type of the
discriminant curve of $\varphi$, in particular the topological type of each branch of
the discriminant can be obtained from the map without the previous knowledge of the
critical locus.
\end{abstract}

\medskip
\noindent
{\bf Keywords} Topological type - Pencils - Analytic
morphisms - Discriminant curve -
Critical locus.

\section*{Introduction}
%{\bf Introduction}

Let $(Z,z)$ be the germ of a complex analytic normal surface and let
$\varphi = (f,g): (Z,z)\to (\C^2,0)$ be the germ of a
finite complex analytic morphism.
Let $\gamma\subset (Z,z)$ be the germ of a reduced curve,  $\gamma
=\bigcup_{i=1}^r \gamma_i$, with $\gamma_i$ a branch in $(Z,z)$, and
$\delta =\bigcup_{i=1}^r \delta_i$ with $\varphi(\gamma_i)=\delta_i,
1\leq i \leq r$. If $(Z,z)$ is equal to $(\C^2,0)$ and $\gamma $ is irreducible,
in \cite{TopIm} we have described the topology of the plane branch $\delta$ by using
iterated pencils of functions defined recursively from the initial one $\Phi= \langle
f,g\rangle$. The purpose of this article is to show that this result can be
generalized to the case of normal surfaces and when the curve $\gamma$
is not necessarily
irreducible; it means that we can describe the topological type of each branch
$\delta_i, 1\leq i\leq r$ (section 1), and moreover   give the intersection
multiplicities  between each pair of branches $\delta_i,\delta_j$ (section
2), which completely describe the topology of $\delta$.

In section 1, we show that, as in the plane  case, the topology of
the image by $\varphi $ of an irreducible curve $\gamma$  can be determined by the
degree of the restriction map $\varphi|_{\gamma}: \gamma\to
\delta=\varphi(\gamma)$
and intersection multiplicities between $\gamma$ and some particular fibres
of a sequence of iterated pencils constructed recursively from
$\langle f,g\rangle$ (see \ref{pencils}).
This result is stated in Theorem
\ref{thm1}. In section
2 we treat the
case when $\gamma$ is  reduced but no more  irreducible. It is well-known that the
topological type of its plane curve image by $\varphi$, $\delta =\bigcup_{i=1}^r
\delta_i$, is enterely determined by the topological type of each irreducible
component $\delta_i=\varphi (\gamma _i)$ and the intersection multiplicity between
each pair $(\delta_i, \delta_j) , j\neq i$. As the topological type of each
irreducible component  image $\delta_i$ has been treated in section 1, it leaves to
compute the intersection multiplicity between each pair of branches of $\delta$.
This result is stated in Theorem \ref{TheoremPrincipal}.
The proof consists in first computing such intersection multiplicities when
$\varphi$ is  the identity map
from the plane to the plane (Theorem
\ref{mainth-plane}),
then using the projection formula for the intersection multiplicity
(see Proposition
\ref{proj-for}) and Theorem \ref{mainth-plane} we can compute the
intersection multiplicity between $\delta _i$ and $\delta _j$ for the
general case of a finite map $\varphi: (Z,z)\to (\C^2,0)$.

A remarkable fact is that, for the computation of the intersection multiplicity, one
needs more precise informations than for the topological type of the branches (see
the Example \ref{ex1}). Namely
informations coming from the branches of the fibres and not only the contacts of
the branches of $\gamma$ with the whole fibres.

In  \cite{TopIm} the process
was in particular  applied to the description of the
branches of the discriminant locus
of $\varphi: (\C^2,0)\to (\C^2,0)$.
To do that the key point was the study of the special fibres of the pencil
$\langle f,g \rangle$
and
their relations with the critical locus of $\varphi$ developed in \cite{DM} as
well as the relation with the behaviour of the Hironaka quotients established in
\cite{LMW} and \cite{Mi}.
For a normal singularity $(Z,z)$ and a finite map $\varphi=(f,g): (Z,z)\to(\C^2,0)$
the relation of the special fibres with the critical locus was generalized in
\cite{PNS}. (For a reduced surface singularity the behaviour of the generic and
special fibres has been recently treated in \cite{BM-JS}).
These relations allow us to extend the results of \cite{TopIm} and so apply the
results of Sections 1 and 2 to describe the topological type of the
discriminant curve of $\varphi$. This is the purpose of Section 3.
In relation with the study of the discriminant curve one has to mention the recent
paper \cite{GB-PP} from Garc\'{\i}a-Barroso and Popescu-Pampu in which they show that
the Newton polynomial of  the discriminant curve essentially depends only on the
pencil $\langle f,g\rangle$ (see also \cite{GGP} for the plane case).
Althougth the results are different, some of the technics used there are very close
to the ones we use here.

\section{Case of an irreducible curve}
Let $\gamma\subset (Z,z)$ be an irreducible curve (in short, a branch) and take
$\eta : (\C, 0) \to (\gamma , z)$ a parametrization (uniformization) of
$(\gamma, z)$. Let
$\{\ell=0\}\subset Z$ be the curve defined by the analytic function $\ell : (Z,z)\to
(\C,0)$. We
define the intersection multiplicity of $\{\ell=0\}$ and $\gamma$ at $z$, denoted $I_z(\ell,
\gamma)$,    by:
$$I_z(\ell, \gamma)=\  \mbox{ord} \ _t(\ell\circ \eta (t)).$$
Notice that the normalization of the local ring $\OO_{\gamma,z}$ is isomorphic to $\C\{t\}$ and 
$\nu_{\gamma}(-):=I_z(-, \gamma)$ is the valuation $\nu_\gamma$
defined by $\gamma$. Moreover, we write $I_z(\ell,\gamma)=\infty$ if $\gamma$ is a branch of
$\{\ell=0\}$, i.e. if the function $\ell$ vanishes on $\gamma$.
If $(Z,z)$ is smooth then $I_z(\ell,\gamma)$ coincides with the usual
intersection multiplicity
between the curve germs $L:=\{\ell=0\}$ and
$\gamma$. In this case we use also the notation $[L,\gamma]=[\{\ell=0\},\gamma]$ (and $[L,\gamma]_z$ if we want to specify the point) to
indicate the intersection multiplicity at $z\in Z$.

\medskip

Let $\varphi^*: \OO_{\C^2,0}\to \OO_{Z,z}$,
be the ring homomorphism associated to
$\varphi$, i.e $h\mapsto \varphi^* h=h\circ \varphi$ for $h\in \OO_{\C^2,0}$.
If $\xi = \{h=0\}\subset (\C^2,0)$ we use also $\varphi^*\xi$ to denote
the local curve in $(Z,z)$ defined by $\varphi^* h$. Notice that $\varphi^*\xi$ is a
local Cartier divisor in $(Z,z)$.

Let $\gamma\subset (Z,z)$ be a branch and $\varphi(\gamma)=\delta$.
The restriction map $\varphi|_{\gamma}: \gamma\to \delta$ has degree $k\ge 1$ and
we define the {\it direct image} of $\gamma$ as $\varphi_* \gamma := k\cdot \delta$.

\begin{proposition}[Projection formula]\label{proj-for}
One has:
\begin{equation}\label{eq1}
I_z(\varphi^* h, \gamma) =
I_0(h, \varphi_* \gamma) (=
k I_0(h,\delta) )
%=  I_0(h,k \cdot\delta)
\end{equation}
\end{proposition}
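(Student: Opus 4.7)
The plan is to unpack both sides of \eqref{eq1} directly from the definition of intersection multiplicity in terms of a parametrization, and then exploit the universal property of the normalization to compare the two orders of vanishing.

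First I would fix a parametrization $\eta : (\C,0) \to (\gamma, z)$ as in the definition, and also a parametrization $\w\eta : (\C,0) \to (\delta,0)$ of the image branch, which exists because $\delta$ is itself an irreducible plane curve germ. By definition,
$$I_z(\varphi^* h,\gamma) = \ord_t\bigl((h\circ\varphi)\circ \eta(t)\bigr) = \ord_t\bigl(h\circ (\varphi\circ\eta)(t)\bigr),$$
while $I_0(h,\delta) = \ord_s(h\circ \w\eta(s))$. The composition $\varphi\circ\eta : (\C,0) \to (\delta,0)$ is a finite map from a smooth germ to $\delta$, so by the universal property of the normalization $\w\eta$ it factors uniquely as $\varphi\circ\eta = \w\eta\circ\psi$ for some germ $\psi : (\C,0) \to (\C,0)$.

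Next I would identify $\ord_t\psi(t)$ with $k$. Since $\eta$ is a degree-$1$ normalization of $\gamma$, the degree of $\psi$ (as a map of germs of smooth curves) equals the degree of $\varphi|_\gamma : \gamma \to \delta$, which is $k$ by assumption. Equivalently, writing $\psi(t) = c\,t^m + \cdots$ with $c\neq 0$, the generic fiber of $\psi$ has $m$ points and matches the generic fiber of $\varphi|_\gamma$ through $\w\eta$, so $m=k$. Substituting in the computation above,
$$I_z(\varphi^* h,\gamma) = \ord_t\bigl(h\circ \w\eta\circ\psi(t)\bigr) = \ord_t\psi(t)\cdot \ord_s\bigl(h\circ\w\eta(s)\bigr) = k\cdot I_0(h,\delta),$$
which by the definition $\varphi_*\gamma = k\cdot\delta$ equals $I_0(h,\varphi_*\gamma)$, giving \eqref{eq1}.

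Finally I would verify the degenerate case $h\circ\varphi\circ\eta \equiv 0$: then $h$ vanishes on the image $\varphi(\gamma) = \delta$, so by the conventions stated just before the proposition both sides of \eqref{eq1} are $\infty$ and the identity still holds. The only non-routine point is the factorization step, and this is handled by the universal property of the normalization applied to the smooth source $(\C,0)$; once that factorization is in place, the projection formula reduces to the chain rule for orders of vanishing.
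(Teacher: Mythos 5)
Your proof is correct and takes essentially the same route as the paper: both reduce the statement to the observation that $\varphi$ induces a degree-$k$ map between the normalizations of $\gamma$ and $\delta$, after which the chain rule for orders of vanishing supplies the factor $k$. You phrase the factorization $\varphi\circ\eta=\w\eta\circ\psi$ through the universal property of normalization applied to the parametrizations, whereas the paper reaches the same point algebraically via the degree-$k$ finite extension of local rings $\OO_{\delta,0}\hookrightarrow\OO_{\gamma,z}$ and the induced inclusion $\C\{\tau\}\subset\C\{t\}$ with $\tau=t^k$, but the computational core is identical.
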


\begin{proof}
Let ${\mathfrak p}$ be the prime ideal on $\OO_{Z,z}$ defining the curve germ
$\gamma$ on
$(Z,z)$, then the prime ideal ${\mathfrak q} = (\varphi^*)^{-1}({\mathfrak p})$ is
the ideal
defining $\delta =\varphi (\gamma)\subset (\C^2,0)$ and one has the finite extension of
one-dimensional local rings
$\OO_{\delta,0} = \OO_{\C^2,0}/\mathfrak{q}\hookrightarrow
\OO_{\gamma,0} = \OO_{Z,z}/\mathfrak{p}$. The degree of the above extension is just
$k$: the one of the restriction map $\varphi|_{\gamma}: \gamma\to \delta$.
Let $\C\{t\}$ (respectively $\C\{\tau\}$) be isomorphic to the normalization of
$\OO_{\gamma,z}$ (respectively to the one of
$\OO_{\delta,0}$), one has $\C\{\tau\}\subset \C\{t\}$ and $\tau(t)\in \C\{t\}$ has
order $k$. In fact one could take $\tau = t^k$ with a convenient choice of the
uniformizing parameters $\tau$ and $t$.
Now, let $h \in \OO_{\C^2,0}$ be an holomorphic  function on $\C^2$. One has that
$\ord_t(\varphi^* h (t)) = \ord_t(h(t^k)) = k \cdot \ord_{\tau}(h(\tau))$.
\end{proof}

\medskip
\begin{remark}
The above
formula~(\ref{eq1})
extends the classical projection formula for a map $\varphi$ between smooth surfaces
(see, e.g. \cite{fulton}). Notice also that one could extend the above formula for
any local Weil divisor
$\gamma = \sum_{i=1}^{\ell}n_i \gamma_i$ on $(Z,z)$.
Finally, the formula (\ref{eq1}) is obviously also true for a finite morphism
$\varphi: (Z,z)\to (X,x)$ between normal surface singularities.
\end{remark}

\begin{setting}
Let $\pi: (X,E)\to (Z,z)$ be a good resolution of $(Z,z)$, i.e. a resolution of the
singularity $(Z,z)$ such that the exceptional divisor
$E=\bigcup_{\sigma\in \Gamma}E_\sigma$ is a union of smooth projective curves with
normal crossings.
 For $\beta\in \Gamma$ and for each
holomorphic function $h: (\C^2,0)\to (\C,0)$ let $\nu_{\beta}(h)$ denote the
vanishing order of
$\k h = h\circ \pi: X\to \C$ along  the irreducible exceptional curve
$E_{\beta}$ ($\nu_{\beta}$ is just the divisorial valuation defined by
$E_{\beta}$). 
If $\gamma = \{ h=0\}$ is the curve defined by $h$ we use also the notation $\nu(\gamma)$ instead $\nu (h)$, in particular this notation could be used for any curve germ if $(Z,z) = (\C^2,0)$ and $\pi$ is a modification of it.

The lifting $\k \psi = \psi\circ \pi$ of the meromorphic function $\psi=g/f$ is a
meromorphic function defined in a suitable neighbourhood of the exceptional divisor
$E$ but in a finite number of points. The irreducible component $E_\beta$ of $E$ is
called a
{\it dicritical\/} component of the pencil $\Phi=\langle f,g \rangle$ (or of the
meromorphic
function $\psi$) if the restriction of $\k \psi : E_\beta\to \C\P^1$ is everywhere defined and not
constant.

A good resolution of the pencil $\Phi$ is a good resolution $\pi: (X,E)\to (Z,z)$ of $(Z,z)$ such that $\k\psi$ is everywhere defined. In such a case, the zero locus $\{h=0\}$ of the elements $h\in \Phi$ are equisingular and called the generic fibres of $\Phi$, excepted a finite number of them called the special fibres. Moreover from \cite{PNS}, we know that the strict transforms of the generic fibres by $\pi$ intersect the exceptional divisor smoothly and transversaly at smooth points of the dicritical components. In fact, a good resolution of $(Z,z)$ is a good resolution of $\Phi$ if and only if is a resolution of all the fibers of $\Phi$ but a finite number.
\end{setting}

\begin{lemma}\label{lemmaInterDiv}
With the above notations.
Let $h:(Z,z)\to (\C,0)$ be an analytic function, $\eta=\{h=0\}$ and let $(\gamma,z)\subset
(Z,z)$ be a branch such that its strict transform  by $\pi$, $\w{\gamma}$, intersects
$E_\alpha\subset E$ at a smooth point $P$.
%let $\eta:=\{h=0\}$ be a curve. Let
%$\nu_\alpha$ be the divisorial valuation defined by $E_\alpha$ (it means
%$\nu_\alpha  (\eta)= I_0(\eta , C_\alpha) $ with $C_\alpha$ a curvet of $E_\alpha$).
Then
$$
I_z(h, \gamma)= [\w \eta, \w \gamma]_P +[E_\alpha,\w \gamma]_P \; \nu_\alpha(h)\; .
%[\widetilde \eta , \widetilde \rho]_P +
%[E_\alpha, \tilde \rho]_P \nu_\alpha (\eta)\;  .
$$
In particular, if 
$\tilde \eta \cap \tilde \gamma =\emptyset$, then
$
I_z(h,\gamma)=[E_\alpha, \tilde \gamma]_P \; \nu_\alpha (h)\; .
$
\end{lemma}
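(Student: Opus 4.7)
The strategy is to pull back the computation to the resolution $X$, where the divisor of $\k h = h\circ\pi$ factors explicitly near $P$, and then use additivity of $\mathrm{ord}_t$.

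First I would lift the parametrization. Since $\gamma$ is a branch and $\pi$ is a modification of $(Z,z)$ (an isomorphism away from $E$), the strict transform $\w\gamma$ is irreducible and its normalization agrees with that of $\gamma$. Hence the uniformization $\eta:(\C,0)\to(\gamma,z)$ lifts uniquely to a parametrization $\w\eta':(\C,0)\to(\w\gamma,P)$ with $\pi\circ\w\eta'=\eta$, so that
$$I_z(h,\gamma)=\mathrm{ord}_t\bigl(h\circ\eta(t)\bigr)=\mathrm{ord}_t\bigl(\k h\circ\w\eta'(t)\bigr).$$

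Next I would analyze the divisor of $\k h$ near $P$. By definition of $\nu_\sigma$, the total transform of $\eta=\{h=0\}$ reads $\pi^{*}\eta=\w\eta+\sum_{\sigma\in\Gamma}\nu_\sigma(h)\,E_\sigma$. The assumption that $P$ is a smooth point of $E_\alpha$ in the normal crossings divisor $E$ means that $P$ lies on $E_\alpha$ and on no other exceptional component. Therefore, in a neighbourhood of $P$, the function $\k h$ factors as $\k h = u\cdot v^{\nu_\alpha(h)}$, where $u=0$ is a local equation for $\w\eta$ at $P$ and $v=0$ is a local equation for $E_\alpha$ at $P$. Substituting the lifted parametrization and using additivity of $\mathrm{ord}_t$,
$$I_z(h,\gamma)=\mathrm{ord}_t\bigl(u\circ\w\eta'(t)\bigr)+\nu_\alpha(h)\,\mathrm{ord}_t\bigl(v\circ\w\eta'(t)\bigr)=[\w\eta,\w\gamma]_P+\nu_\alpha(h)\,[E_\alpha,\w\gamma]_P,$$
which is the claimed formula. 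The particular case $\w\eta\cap\w\gamma=\emptyset$ reduces the first summand to $0$.

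The routine parts are the factorization of $\k h$ near $P$ and the unwinding of the definition of intersection multiplicity on a smooth surface (since $X$ is smooth and $P$ is a smooth point of both $\w\gamma$ and $E_\alpha$, the numbers $[\w\eta,\w\gamma]_P$ and $[E_\alpha,\w\gamma]_P$ make sense in the usual sense). The only subtle points I would check are: (i) the existence and uniqueness of the lift $\w\eta'$, which uses the universal property of normalization together with the fact that $\pi$ is finite over a neighbourhood of $z$ and an isomorphism off $E$; (ii) the edge case $\gamma\subset\eta$ (equivalently $\w\gamma\subset\w\eta$), where $I_z(h,\gamma)=\infty$ and $[\w\eta,\w\gamma]_P=\infty$, so the formula holds with the usual convention. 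These are really bookkeeping rather than conceptual obstacles; the real content is the local factorization of $\k h$ at the smooth point $P$ of $E_\alpha$.
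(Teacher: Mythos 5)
Your proof is correct, but it takes a different route from the paper. The paper works intersection-theoretically on the resolution $X$: it writes the pullback divisor as $(\k h)=(\w h)+\sum_\beta\nu_\beta(h)E_\beta$, invokes Mumford's formula $(\k h)\cdot E_\sigma=0$ to justify that $I_z(h,\gamma)$ can be computed as $(\k h)\cdot\w\gamma$, and then reads off the two terms that survive at $P$. You instead lift the parametrization of $\gamma$ to $\w\gamma$, factor $\k h$ locally at $P$ as (unit)$\cdot u\cdot v^{\nu_\alpha(h)}$ with $u$, $v$ local equations for $\w\eta$ and $E_\alpha$, and use additivity of $\mathrm{ord}_t$. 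Your approach is more elementary — it never needs the global vanishing $(\k h)\cdot E_\sigma=0$, only the local structure at a smooth point of the exceptional divisor — and it makes the mechanism transparent; the paper's version, by contrast, plugs directly into the intersection-theoretic framework it wants to use repeatedly afterwards. One small inaccuracy in your write-up: you justify the lift of the parametrization by saying ``$\pi$ is finite over a neighbourhood of $z$''; that is not true (the fibre $\pi^{-1}(z)=E$ is one-dimensional). The correct reason is that the restriction $\pi|_{\w\gamma}:\w\gamma\to\gamma$ is finite and birational, so the normalization $\C\{t\}\to\OO_{\gamma,z}$ factors uniquely through $\OO_{\w\gamma,P}$, which is what gives $\w\eta'$.
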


\begin{proof}
The divisor $(\k h)$
 defined by $\k h = h\circ \sigma$ on $X$ could be written as
$$
(\k h) = (\w h) + \sum_{\beta\in \Gamma}\nu_{\beta}(h) E_{\beta} \; ,
$$
where the local part $(\w h)= \w \eta$ is the strict transform of the germ $\{h=0\}$
by $\pi$.
For each $\sigma\in \Gamma$ one has the known Mumford formula (see \cite{Mu}):
\begin{equation}
\label{mumford}
(\k h)\cdot E_{\sigma} = (\w h)\cdot E_{\sigma} + \sum_{\beta} \nu_{\beta}(h)
(E_{\beta}\cdot E_{\sigma}) = 0\; .
\end{equation}
(Here ``$\cdot$" stands for the intersection form on the smooth surface $X$).

Applying the equation (\ref{mumford}) to the total transforms of $\gamma$ and $\eta$ we have:
$$
I_z(h, \gamma)
%=[\eta, \gamma] 
= (\k \eta)\cdot (\k \gamma) =
(\w h)\cdot \w \gamma +
\sum_{\beta\in \Gamma} \nu_{\beta}(h) (E_\beta\cdot \w \gamma ) =
[\w \eta, \w \gamma]_P + \nu_\alpha(h) [E_\alpha, \w \gamma]_P\; .
$$
\end{proof}

The above result will be used  in the paper also in the case of
$(Z,z)=(\C^2,0)$. Note that in this case every curve is defined by a function (i.e. is a
Cartier divisor).

\begin{setting}[{\bf Iterated pencils}]\label{pencils}
We denote $\Phi = \langle g, f \rangle= \{ a g - b f : a, b
\in  \C\}$
the pencil  defined by $f$ and $g$.
Let us assume that
$I_z(g,\gamma)\ge I_z(f,\gamma)$.
If $I_z(g,\gamma)=\infty$ then $\varphi(\gamma)$ is the $x$ axis $\{y=0\}$ and the
procedure stops here. Otherwise,  let $q/p = I_z(g,\gamma )/I_z(f,\gamma )\ge 1$ with gcd$(p,q)=1$.
Let $\Phi_1$ be the pencil
generated by
$g^p$ and
$f^q$. From Proposition 1 of \cite{PNS} the intersection multiplicity
$I_z(ag^p-bf^q,
\gamma)$ is constant (and equal to 1) for all $w=(a:b)\in \C\P^1$,
 except for  one value $w_0$ in $\C\P^1$. With the hypothesis that  $q/p\ge 1$,
it is obvious that we can write
$w_0= (1:a_1)$. We denote  $g_1= g^p-a_1f^q$.

Let $f_1:=f^q$, and $\varphi
_1= (f_1,g_1) : (Z,z) \longrightarrow (\C^2, 0)$.
If $I_z( g_1,\gamma)=\infty$ then $\gamma$ is a
branch of $\{ g_1=0\}$ and in  this case one has  $\varphi _1(\gamma)=L$,
$L= \{y_1=0\}$ for $y_1 = y^p-a_1 x^q$, so
$\varphi(\gamma)=\{ y^p-a_1x^q=0\} $
and we stop here.
%as the topological type of $\delta$ is obtained.

Else, we consider
$\Phi_1 = \langle g_1, f_1\rangle$ and we can define a new
rational number
${q_1}/{p_1} = I_z(g_1,\gamma )/I_z(f_1,\gamma)>1$.
In this case we can proceed to construct $\Phi_2 = \langle
g_2, f_2\rangle$, $g_2\in \Phi_2$,
 and ${q_2}/{p_2}$
 as we did for $\Phi_1, g_1, q_1/p_1$.

Recursively, we  define a sequence
$\PP(f,g,\gamma) = \{(\Phi_i,g_i,q_i/p_i) : i\ge 0\}$ where,
$(\Phi_0, g_0, q_0/p_0) = (\Phi, g, q/p)$ and, for $i\ge 1$,
$\Phi_i = \langle g_i,f_i \rangle =
\langle g_{i-1}^{p_{i-1}},f_{i-1}^{q_{i-1}} \rangle$ is a pencil of functions on
$(Z,z)$, $g_i$ is the unique function of
$\Phi_i$ such that $I_z(g_i, \gamma)>I_z(h,\gamma)$ ($h\in \Phi_i$ a generic
function) and
${q_i}/{p_i} = I_z(g_i,\gamma)/I_z(f_i,\gamma)>1$, with $\gcd(q_i,p_i)=1$.
We denote also $g_{-1}=f$ and by $\xi_i$ the curve on $(Z,z)$ defined by
$g_i=0$.
Note that, for $i\ge 1$, $f_i = f^{q_0 \cdots q_{i-1}}$ and also that  the sequence
is infinite unless there exists $l\in \N$ such that
$q_l/p_l=\infty$ in which case the sequence is finite and ends at $l$.

\end{setting}

\begin{definition}
The {\it iterated sequence of pencils } of the branch $\gamma$ (with respect
to $\varphi=(f,g)$) is the sequence $\PP(f,g,\gamma)$  defined above.
\end{definition}

\begin{remark}\label{pencils-plane}
Along the paper we use frequently the case
$(Z,z) = (\C^2,0)$ and $\varphi =(x,y)$ the identity map from
$(\C^2,0)$ to $(\C^2,0)$. In this case we use
$\{(\Lambda_i,y_i, q_i/p_i) : i\ge 0\}$ to denote the iterated pencil of a branch
$\delta\subset (\C^2,0)$. We will also denote $\lambda_i$ the curve defined by
$y_i=0$.

\end{remark}

\begin{theorem}\label{thm1}
Let $k$ be the degree of the
restriction map $\varphi _{|\gamma} : \gamma \longrightarrow \delta$ and
let $\varphi_*  \gamma = k\cdot \delta$ be the direct image
of $\gamma$.
The sequence
$\{I_z(g_i,\gamma ) : i= -1, 0, \ldots \}$ and the integer $k$
determine the topological type of $\delta$.
\end{theorem}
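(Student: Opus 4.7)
The strategy is to transport the whole iterated-pencil construction on $(Z,z)$ through $\varphi$ down to $(\C^2,0)$, thereby reducing to the plane-case statement (in which the iterated pencil of $\delta$ with respect to the identity, $\{(\Lambda_i,y_i,q_i/p_i)\}$, is essentially the standard Newton/Puiseux-pair data of $\delta$ and so determines its topological type, exactly as in \cite{TopIm}). The only thing I need is that the data $\{I_z(g_i,\gamma)\}$ together with $k$ recovers the plane iterated pencil data $\{(\Lambda_i,y_i,q_i/p_i)\}$ of $\delta$.

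The key step is an induction on $i$ showing that each function $g_i$ constructed on $(Z,z)$ is the pull-back $\varphi^*y_i$ of a function $y_i$ belonging to the plane iterated pencil $\PP(x,y,\delta)$ of $\delta$, and that the Newton ratios coincide: $q_i/p_i$ (for $\gamma$) equals the corresponding ratio (for $\delta$). For the base case $i=0$, Proposition~\ref{proj-for} gives
\[
\frac{I_z(g,\gamma)}{I_z(f,\gamma)} \;=\; \frac{k\,I_0(y,\delta)}{k\,I_0(x,\delta)} \;=\; \frac{I_0(y,\delta)}{I_0(x,\delta)},
\]
so the ratios $q/p$ agree, and the special fiber selection also agrees: indeed $\varphi^*(y^p-a x^q)=g^p-a f^q$ and $I_z(g^p-a f^q,\gamma)=k\,I_0(y^p-a x^q,\delta)$, so the unique value of $a$ making the left side jump is the same as the one making the right side jump. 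Hence $g_1=\varphi^*y_1$ with the same coefficient $a_1$. The inductive step is identical: granted $g_{i-1}=\varphi^*y_{i-1}$ and $f_{i-1}=\varphi^*x_{i-1}$ (where $x_{i-1}=x^{q_0\cdots q_{i-2}}$), the projection formula again forces the ratios $q_i/p_i$ and the special fiber coefficient $a_i$ to coincide on both sides, giving $g_i=\varphi^*y_i$.

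Once this correspondence is established, the projection formula yields
\[
I_0(y_i,\delta) \;=\; \frac{1}{k}\,I_z(g_i,\gamma) \qquad (i=-1,0,1,\dots),
\]
so the full plane iterated pencil data of $\delta$ is computable from the input $(\{I_z(g_i,\gamma)\},k)$. Applying the plane-case theorem of \cite{TopIm} (or equivalently, observing that the sequence $\{q_i/p_i\}$ recovered in this way is the sequence of Newton pairs, and hence the characteristic exponents, of the plane branch $\delta$) concludes the proof.

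The main technical obstacle is making the compatibility of the iterated constructions precise across a single step: one has to verify both that the Newton ratio is preserved and that the \emph{same} coefficient $a_i$ is selected on both sides. Both follow from Proposition~\ref{proj-for} applied to the one-parameter family $ay_{i-1}^{p_{i-1}}-bx_{i-1}^{q_{i-1}}$, using that $I_z(\cdot,\gamma)$ and $I_0(\cdot,\delta)$ differ only by the global factor $k$, so jumps in intersection multiplicity occur at exactly the same parameter value; this is what rigidifies the induction and lets the plane case take over.
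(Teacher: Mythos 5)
Your proposal takes essentially the same route as the paper: both reduce to the plane case via the projection formula, both rest on the identification $\varphi^*(\Lambda_i)=\Phi_i$, $\varphi^*(y_i)=g_i$, and both then invoke the plane-case theorem (Theorem 2 of \cite{TopIm}) to recover the semigroup of $\delta$ from the sequence $\{I_0(y_i,\delta)\}=\{I_z(g_i,\gamma)/k\}$. The paper simply asserts the pull-back compatibility $\varphi^*(\Lambda_i)=\Phi_i$, $\varphi^*(y_i)=g_i$ in one line, whereas you spell out the induction verifying that the ratios $q_i/p_i$ and the special-fiber coefficients $a_i$ agree on both sides; this is a useful amplification of the same argument rather than a different approach.
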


And as an easy consequence one has:

\begin{corollary}\label{cor1} The sequence of rational numbers
$\{ {q_i}/{p_i}, i\geq 0\}$ determines the topological type of
$\delta$.
\end{corollary}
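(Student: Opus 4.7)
The strategy is to reduce to the plane case via the projection formula and then invoke Theorem \ref{thm1}. First, I would show by induction on $i$ that $f_i = \varphi^* x_i$ and $g_i = \varphi^* y_i$, where $(\Lambda_i, y_i, q_i/p_i)$ is the plane iterated pencil of $\delta$ from Remark \ref{pencils-plane}; in particular the reduced ratios $q_i/p_i$ then agree in both sequences. The base case is $f_0 = \varphi^* x$, $g_0 = \varphi^* y$. For the inductive step, Proposition \ref{proj-for} gives
\[
\frac{I_z(g_i, \gamma)}{I_z(f_i, \gamma)} \;=\; \frac{k\, I_0(y_i, \delta)}{k\, I_0(x_i, \delta)} \;=\; \frac{I_0(y_i, \delta)}{I_0(x_i, \delta)},
\]
so the ratios match, and the distinguished coefficient $a_{i+1}$ (the unique $a$ for which $I_z(g_i^{p_i} - a f_i^{q_i}, \gamma)$ jumps above its generic value) coincides with the one picked out in the plane pencil of $\delta$; this yields $g_{i+1} = \varphi^*(y_i^{p_i} - a_{i+1} x_i^{q_i}) = \varphi^* y_{i+1}$.

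This reduces the corollary to the plane case $(Z,z) = (\C^2,0)$, $\varphi = \mathrm{id}$, $\gamma = \delta$, $k = 1$. By Theorem \ref{thm1} the sequence $\{I_0(y_i, \delta)\}_{i \geq -1}$ determines the topology of $\delta$. From $f_i = f^{q_0\cdots q_{i-1}}$ one has $I_0(x_i, \delta) = q_0\cdots q_{i-1}\, I_0(x,\delta)$ and hence
\[
I_0(y_i, \delta) \;=\; \frac{q_0\, q_1 \cdots q_i}{p_i}\, I_0(x, \delta),
\]
so the ratios $\{q_i/p_i\}$ recover the entire sequence $\{I_0(y_i, \delta)\}$ up to the single common factor $I_0(x, \delta)$.

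The remaining point—and the only real issue to check—is that this overall scalar $I_0(x,\delta)$ is immaterial for the topological type of $\delta$. Indeed, the values $I_0(y_i, \delta)$ (together with $I_0(x, \delta)$) are the generators of the semigroup of $\delta$, and the Puiseux pairs of a plane branch—which determine its topological type—depend only on the relative ratios among these generators, not on their common scale. Equivalently, the assumption $q_0/p_0\geq 1$ forces $x$ to be non-tangent to $\delta$, so $I_0(x,\delta) = m$ is the multiplicity of $\delta$, itself recoverable from the reduced ratios $q_i/p_i$ by the standard readout of Newton/Puiseux pairs from an iterated Newton procedure. Either formulation shows that $\{q_i/p_i\}$ alone determines the topological type of $\delta$.
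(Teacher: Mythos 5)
The reduction to the plane case via the projection formula, and the observation that the sequence $\{I_0(y_i,\delta)\}$ is determined by the ratios up to the single common scalar $I_0(x,\delta)$, are both correct and match the paper's route. The gap is in the final paragraph, which is precisely where the real content of the corollary lives. First, the claim that ``the values $I_0(y_i,\delta)$ (together with $I_0(x,\delta)$) are the generators of the semigroup of $\delta$'' is false: in the proof of Theorem~\ref{thm1} the semigroup generators $m_i$ are produced from the intersection numbers $\mu_i=I_0(y_i,\delta)$ through the nontrivial recursion of equation~(\ref{eq1-1}), and already $m_2\ne\mu_2$ in general. For instance, for the branch $x=t^8$, $y=t^{12}+t^{14}+t^{15}$ one gets the semigroup $\langle 8,12,26,53\rangle$ while $\mu_2=I_0(y_2,\delta)=313$. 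Second, the ``equivalently'' formulation simply asserts that the multiplicity $I_0(x,\delta)$ is ``recoverable from the reduced ratios $q_i/p_i$ by the standard readout of Newton/Puiseux pairs'': but that recovery \emph{is} the statement of the corollary, so invoking it is circular.

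What actually has to be shown (and is what the paper's argument, deferred to Corollary~1 of \cite{TopIm} and reappearing in the proof of Theorem~\ref{Thm1-3}, carries out) is that the ratios $\{q_j/p_j\}$ determine each $\mu_i/\mu_{-1}$, that these rationals in turn determine the integers $d_i=\epsilon_{i-1}/\epsilon_i$ (since $\epsilon_i/\mu_{-1}$ depends only on the collection $\{\mu_j/\mu_{-1}\}$), and that the single remaining scalar $m_{-1}=\mu_{-1}/k$ is then pinned down by the branch condition $\gcd\{m_i : i\geq -1\}=1$. Only with that last step does the full semigroup $\langle m_i\rangle$, and hence the topological type of $\delta$, emerge from the ratios alone. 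Your sketch stops short of this, so the corollary is not established.
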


\begin{proof}
The proof of Theorem \ref{thm1} and Corollary \ref{cor1} repeats the scheme used in
the Theorem 1 (see also Corollary 1) of
\cite{TopIm}. The first step consists
in the proof that the topological type (say e.g. the semigroup of values) of $\delta$
could be determined with the described procedure in the particular case
in which we take the identity map
$(x,y): (\C^2,0)\to (\C^2,0)$.
For the sake of completeness we will describe the procedure, the proof could be read in
\cite{TopIm}.

Assume that
$\delta$ is not a coordinate axis and
$I_0 (x,\delta )\le I_0(y,\delta )$. Let us consider
the iterated sequence of pencils $\PP(x,y,\delta)=\{(\Lambda_i,y_i,q_i/p_i):i\ge 0\}$
of $\delta$ with respect to the identity.

Let $m_{-1}, m_0, \ldots$ be the sequence of positive integers
defined recursively as follows:
$m_{-1}=I_0(x,\delta)$,
$m_{0}=I_0(y,\delta)$ and, if we assume that $\seq m1i$ have
been defined, then
\begin{equation}\label{eq1-1}
m_{i+1} := d_i m_i + I_0( y_{i+1}, \delta ) - p_i I_0(y_i, \delta )\; ,
\end{equation}
where $d_i= \gcd (m_{-1}, \ldots, m_{i-1})/ \gcd (m_{-1}, \ldots,
m_{i})$.

Theorem 2 in \cite{TopIm} says that the
sequence $m_{-1}, m_0, \ldots$ generates the
semigroup of values of $\delta$ (see also the Remark below). 

\medskip

Now, using  the ring homomorphism $\varphi^*: \OO_{\C^2,0}\to \OO_{Z,z}$  associated
to $\varphi$, one can recover by pullback the iterated sequence of pencils
$\PP(f,g,\gamma)$ on $(Z,z)$: that means
$\varphi^*(\Lambda_i) = \Phi_i$, $\varphi^*(y_i)=g_i$ for $i\ge 0$.
The projection formula \ref{proj-for} applied to the equations \ref{eq1-1}
gives:
$$
m_{-1}=I_0(x,\delta)= I_z(f, \gamma)/k, \quad
m_{0}=I_0(y,\delta)=I_z(g,\gamma)/k
$$
and for $i\ge 1$:
$$
m_{i+1}= d_i m_i + I_0( y_{i+1}, \delta ) - p_i I_0(y_i, \delta )= d_i m_i
+\displaystyle\frac{ I_z(g_{i+1},\gamma)}{k }- p_i
\displaystyle\frac{I_z(g_i,\gamma)}{k}\; .
$$
Thus, we recover the semigroup of values of the branch $\delta$.

\medskip
The proof of the Corollary is the same as the one of Corollary 1 in \cite{TopIm}, however is also included in the proof of Theorem \ref{Thm1-3}.
\end{proof}

\begin{remark}\label{rmk_m}
In the proof of Theorem \ref{thm1} we recover the semigroup of values, in the same
way one can recover the characteristic exponents of $\delta$.
Let $\mu_i  = I_z(g_i,\gamma)$ for  $i\ge -1$.
Let $m_{-1} = \w m_{-1} =  I_z(f,\gamma)/k$, $m_0 = \w m_0 = I_z(g_0,\gamma)/k$ and
for $i\ge 1$,
\begin{align*}
m_{i+1} & = d_i m_i + \frac{\mu_{i+1}}{k} - p_i
\frac{\mu_i}{k} = d_i m_i + \frac{\mu_{i+1}}{k} - q_i
\frac{I_z(f_i,\gamma)}{k}\\
\w m_{i+1} & = \w m_i + \frac{\mu_{i+1}}{k} - p_i
\frac{\mu_i}{k} =
\w m_i +  \frac{\mu_{i+1}}{k} -
q_i  \displaystyle\frac{I_z(f_i,\gamma)}{k}
\end{align*}
where, $d_i = \gcd(\seq{m}{-1}{i-1})/\gcd(\seq{m}{-1}{i})$. Note that
$$
e_i:=\gcd(\seq{m}{-1}i) = \gcd(\seq{\w m}{-1}i)= \gcd(\seq{\mu}{-1}i)/k\; .
$$
As a consequence, denoting $\epsilon_i:=\gcd(\seq{\mu}{-1}{i})$, one has  also
$d_i =
\epsilon_{i-1}/\epsilon_{i}$.

From \cite{TopIm} one knows that the minimal set of generators
$\{\seq{\betab}0g\}$ of the semigroup of the plane germ $\delta$ is a subset
of $\{m_i \st i\ge -1\}$. More specifically
$\{\seq{\betab}0g\} = \{m_{-1}\}\cup \{m_i : d_i>1\}$.
In the same way, the set of characteristic exponents
$\{\seq{\beta}0g\}$ of $\delta$ is a subset of $\{\w m_i\st i\ge -1\}$, namelly the
set $\{\seq{\beta}0g\} = \{\w m_{-1}\}\cup \{\w m_i : d_i>1\}$.

In particular the computation of the sequences stops in a finite number of steps, just
when $e_i=1$ or equivalently $\gcd(\seq{\mu}{-1}i)=k$.
Notice that $k =\gcd \{\mu_i : i\ge -1\}$, i.e. the set $\mu=\{\mu_i:i\ge -1\}$ generates a finitely generated subsemigroup of $k\Z$. So theoretically one can recover the integer $k$ and so the topological type of $\delta$ from $\mu$, however it can not be computed in practice.  
\end{remark}

\section{Case of several branches}

In this section we treat the case in which $\gamma \subset (Z,z)$ is a reduced but
not irreducible curve in $(Z,z)$. We denote $\gamma =\bigcup_{i=1}^r
\gamma_i$, with $\gamma_i$ a branch in $(Z,z)$, and its image by $\varphi$ is
$\delta =\bigcup_{i=1}^r \delta_i$ in such a way that
$\varphi(\gamma_i)=\delta_i, 1\leq i \leq r$. Pay attention that we could have
$\delta_i=\delta_j$ for some $i\neq j$.

To describe the topological type of $ \delta$ it suffices to describe the
topological type of each branch $\delta_i, 1\leq i\leq r$, and give the
intersection multiplicities $[\delta_i,\delta_j]$ between pairs of different
branches $\delta_i,\delta_j$. Thus it is enough to resolve the case of two branches.

Along the section $\gamma, \gamma'\subset(Z,z)$ are two irreducible curves in $(Z,z)$
and we will assume
that $\varphi(\gamma)=\delta$ is not equal to  $\varphi(\gamma ')=\delta'$.

\subsection{Case of $(\C^2, 0)$}

We will
start, as in the irreducible case, describing the intersection multiplicity
$[\delta, \delta ']$ by the use of the iterated pencils starting with the pencil $\Lambda_0=\langle x,y\rangle$ on $\C^2$
corresponding to the identity map from 
$(\C^2,0)$ to $(\C^2,0)$.
So, we proceed by induction.

\subsubsection{First step}\label{first step}
Let assume that $I_0(\delta,y)\ge I_0(\delta,x)$ and let ${I_0(\delta,y)}/{
I_0(\delta,x)}={q}/{p} \geq 1$, with $\gcd(p,q)=1$.
We have to consider several situations according to the value of
${I_0(\delta',y)}/{ I_0(\delta',x)}$:

\bigskip
{\bf a)}
If ${I_0(\delta',y)}<{ I_0(\delta',x)}$ then
$\{ x=0\}$ is tangent to $\delta '$ and
transversal to $\delta$. So $\delta$ and $\delta'$ are transversal and we have
$[\delta, \delta ']= I_0(\delta, x)I_0(\delta', y)$.

\medskip

Otherwise we have  ${I_0(\delta',y)}/{ I_0(\delta',x)}= q'/p'\geq 1$,  with
$\gcd(p',q')=1$.

\medskip
{\bf b)}
Let us suppose that $q'/p'\neq q/p$, and $q'/p'\ge 1 $ and $ q/p \ge 1$. 
Assume that $q'/p'>q/p\geq 1$.

\noindent Note that if $q/p=1$ then $\delta $ and $\delta '$ are transversal and then
$$
[\delta, \delta ']= I_0(\delta, x)I_0(\delta', x)
= I_0(\delta, y)I_0(\delta', x) < 
I_0(\delta', y)I_0(\delta, x) \; . 
$$
Else we have $q'/p'>q/p> 1$ and in this case $\{ y=0\}$ is tangent to $\delta $ and
$\delta'$. Then, an easy computation (see also the proof of Proposition \ref{IntersectionMult}) allows to show that \\
$$[\delta, \delta ']=\min \{  I_0(\delta, x)I_0(\delta', y),  I_0(\delta,
y)I_0(\delta', x)\} =I_0(\delta, y)I_0(\delta', x)\; .$$

\medskip
{\bf c)} Endly, let us assume that $q/p=q'/p' \geq 1$.
Let $\Lambda _1=\langle y^p , x^q\rangle$ be the first
iterated pencil. Notice that $\Lambda_1=\Lambda_1'$. Taking into
account the construction of (\ref{pencils}), let $y_1 =y^p-ax^q \in
\Lambda
_1$, $a\neq 0$, be the only  fibre of $\Lambda_1 $ such that  $ I_0(\delta, y_1)>I_0(\delta ,y^p)
(=I_0(\delta, x^q))$ (resp. $y_1'=y^p-a'x^q \in \Lambda_1$, $a'\neq 0$, the corresponding one
for $\delta '$). We have to distinguish two cases according to $a=a'$ or $a\neq a'$.

\medskip
{\bf c-1)}\underline{\it Case  $a\neq a'$.}

\begin{lemma}
If $a\neq a'$ then $[\delta, \delta '] =  I_0(\delta, x)I_0(\delta', y)=
I_0(\delta, y)I_0(\delta', x)$.
\end{lemma}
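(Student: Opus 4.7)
The plan is to reduce the intersection number $[\delta,\delta']$ to toric data on the dicritical component obtained by resolving the first iterated pencil $\Lambda_1=\langle y^p,x^q\rangle$. As a preliminary, since $\gcd(p,q)=\gcd(p',q')=1$ and $q/p=q'/p'$, one has $p=p'$ and $q=q'$, whence $I_0(\delta,y)=(q/p)I_0(\delta,x)$ and $I_0(\delta',y)=(q/p)I_0(\delta',x)$; thus the two candidate products $I_0(\delta,x)I_0(\delta',y)$ and $I_0(\delta,y)I_0(\delta',x)$ are automatically equal, so it suffices to establish the formula $[\delta,\delta']=I_0(\delta,x)I_0(\delta',y)$.

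I would take $\pi:(X,E)\to(\C^2,0)$ to be the Hirzebruch--Jung resolution of weight $(p,q)$, which resolves $\Lambda_1$ and introduces a single dicritical component $E_\alpha$ with associated monomial valuation $\nu_\alpha(x)=p$ and $\nu_\alpha(y)=q$. On $E_\alpha\cong\C\P^1$ the meromorphic function $y^p/x^q$ serves as a coordinate, and the strict transform of each special fibre $\{y^p-\lambda x^q=0\}$ meets $E_\alpha$ at the smooth point where $y^p/x^q=\lambda$. In particular $\w\delta\cap E_\alpha=\{P\}$ with $P=\{y^p/x^q=a\}$ and $\w{\delta'}\cap E_\alpha=\{P'\}$ with $P'=\{y^p/x^q=a'\}$, and the hypothesis $a\neq a'$ yields $P\neq P'$, so $\w\delta$ and $\w{\delta'}$ are disjoint in a neighbourhood of $E$.

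Next I would apply Lemma~\ref{lemmaInterDiv} to a local equation $h'$ of $\delta'$, with $\gamma=\delta$ and $\eta=\delta'$: since $\w{\delta'}$ avoids $P$, the formula reduces to
$$
[\delta,\delta']=I_0(h',\delta)=\nu_\alpha(h')\cdot[E_\alpha,\w\delta]_P.
$$
Two short calculations then finish the argument. Applying the same lemma with $h=x$ (whose strict transform avoids $P$) gives $I_0(\delta,x)=p\cdot[E_\alpha,\w\delta]_P$, hence $[E_\alpha,\w\delta]_P=I_0(\delta,x)/p$. For $\nu_\alpha(h')$ I would invoke that $\nu_\alpha$ is the monomial valuation with weights $(p,q)$ and that the Newton polygon of $h'$, controlled by the initial Puiseux pair $(q,p)$ of $\delta'$, has lowest edge joining $(0,n')$ to $(n'q/p,0)$ along which $pi+qj=n'q$ (with $n'=I_0(\delta',x)$); thus $\nu_\alpha(h')=n'q=p\cdot I_0(\delta',y)$. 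Substituting gives $[\delta,\delta']=I_0(\delta,x)\cdot I_0(\delta',y)$.

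The main obstacle is the geometric identification of the intersection points of $\w\delta$ and $\w{\delta'}$ with $E_\alpha$: the hypothesis $a\neq a'$ is precisely what forces these two points to be distinct, enabling the intersection number to factor into the clean product of two toric data. Without the separation $P\neq P'$ one would have to go one step deeper into the iterated pencil, which is the content of the companion case $a=a'$.
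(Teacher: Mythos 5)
Your proof is correct and follows the same geometric route as the paper: resolve the pencil $\Lambda_1=\langle y^p,x^q\rangle$, locate $\w\delta$ and $\w{\delta'}$ at the points $P$ and $P'$ of the dicritical component $E_\alpha$ where $y^p/x^q$ takes the values $a$ and $a'$, and use $a\neq a'$ to get $P\neq P'$. Where the paper at this point simply cites the resulting intersection formula as known, you supply the explicit computation via Lemma~\ref{lemmaInterDiv} and the monomial weights $\nu_\alpha(x)=p$, $\nu_\alpha(y)=q$, which is a legitimate (and clarifying) way to fill in that step; the only slip is terminological, calling the generic fibres $\{y^p-\lambda x^q=0\}$ ``special''.
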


\begin{proof} Let $\sigma : (X,E)\to (\C^2,0)$ be the minimal resolution of
$\Lambda_1$. Notice that the fibers $y_1,y_1'$ are generic for $\Lambda_1=\langle
y^p,x^q\rangle$, in particular they are equisingular.  The fact
that $  I_0(\delta, y_1)>I_0(\delta, y^p)$ implies that the strict transform of
$\delta$ and $y_1$ by $\sigma$ intersect in a point $P\in E_\alpha \subset E$. As
$y_1$ is generic for $\Lambda_1$ it implies  that $E_\alpha$ is a dicritical divisor 
for $\Lambda_1$
(it is the unique one in this particular case,
see \cite{DM})
and $P$ is a smooth point of $E_\alpha$ which is not a critical point. (In this case none
smooth point of $E_\alpha$ is critical). In the same way one has that the strict
transform of $\delta'$ and $y_1'$ by $\sigma$ intersect in a point $P'\in
E_\alpha\subset E$. As $a\neq a'$ we have $P\neq P'$ and it is known that
$[\delta, \delta '] =  I_0(\delta, x)I_0(\delta', y)=  I_0(\delta,
y)I_0(\delta', x)$.
\end{proof}

\medskip

{\bf c-2)}\underline{\it Case  $a= a'$.}

\medskip

Following the notations of the previous Lemma, as $\gcd(p,q)=1$, the fibres
$y_1$ and $y_1'$ are irreducible, so the case $a=a'$ is equivalent to say $P=P'$
and  $[\delta, \delta '] >  I_0(\delta, x)I_0(\delta', y)$. In this case one has
to  iterate the process.

\begin{remark}
In all the described cases but in case c-2) we have proved that
$$
[\delta, \delta'] = \min \{I_0(\delta, x)I_0(\delta', y),
I_0(\delta,y)I_0(\delta', x)\}\; .
$$
On the other hand, in the case c-2) for the first iterated pencils
$(\Lambda_1,y_1,q_1/p_1)$ and $(\Lambda_1', y_1', q'_{1}/p'_{1})$ of $\delta$ and
$\delta'$ respectively we have $\Lambda_1=\Lambda_1'$,
$y_{1}=y'_1$ and $P=P'$. Moreover, for the same reasons as in {\bf c-1}, the divisor $E_{\alpha}$ is a dicritical divisor of
$\Lambda_1$ and $P\in E_{\alpha}$ is not a critical point.
\end{remark}

\subsubsection{General recursive step}\label{recursive step}

For an index $i\geq 1$,
let $(\Lambda_j, y_j, q_j/p_j)$ and
$(\Lambda'_j, y'_j, q'_j/p'_j)$, $j\le i$, be the sequence of iterated pencils for
$\delta$ and $\delta'$.
Let us assume that, for $j\leq i$ one has
$\Lambda_j=\Lambda_j'$ and  $y_j=y'_j$ and moreover $q_j/p_j=q_j'/p_j'$ for $j< i$.

Let $\pi : (X,E)\to (\C^2,0)$ be the minimal resolution of the pencil
$\Lambda_i=\langle x_i,y_i\rangle$. Let $\widetilde \delta$ (resp.
$\widetilde \delta '$) be
the strict transform of $\delta $ (resp. $\delta '$) by $\pi$.
We suppose that $\widetilde \delta\cap E= \widetilde \delta '\cap
E$ and we denote $Q$ this point.

Following the same idea of the proof of Theorem 2 in \cite{TopIm}, we assume the
following properties:

\begin{enumerate}
\item $Q = \widetilde \delta\cap E= \widetilde \delta '\cap E$ is a smooth point of
the exceptional divisor $E$. So, there exists a (unique)
irreducible component $E_{\alpha}\subset E$ such that
$Q = \widetilde \delta\cap E_\alpha= \widetilde \delta '\cap E_\alpha$.
\item
The irreducible component $E_\alpha$ is dicritical for the
pencil $\Lambda_i$.
\item
%The fibre $\xi_i\in \Lambda_i$ is a generic one and
There exists
a (unique) branch $\zeta_i$ of $\lambda_i = \{ y_i=0\}$ such that
it is a curvette at the point $Q$ (i.e. 
its strict transform by $\pi$ is smooth and transversal to $E_{\alpha}$  at the point $Q$).
\end{enumerate}

Let $\sigma : (X', E')\to (\C^2,0)$ be the composition of $\pi$
with the minimal modification of $(X, Q)$
until the strict transform of $\delta$ and $\delta '$  by
$\sigma$ meets
the exceptional locus $E'$ at  smooth points, $P, P'$, and also such  that the
strict transforms of $\zeta_i$ by $\sigma$ do not meet $P$ nor $P'$.

\begin{proposition} \label{IntersectionMult}
Following the above notations we obtain :
\begin{enumerate}
\item
If $P\neq P'$ one has :
$$
[\delta ,\delta '] = \min \{  [E_\alpha, \widetilde \delta]_Q \; [\delta '
,\zeta_i], [E_\alpha, \widetilde \delta ']_Q\; [\delta ,\zeta_i]\}\; .
$$
\item
If $P= P'$ then $q_i/p_i=q'_i/p'_i$, $\Lambda_{i+1}=\Lambda'_{i+1}$,
$y_{i+1}=y'_{i+1}$ and we proceed with a new iteration.
Moreover one has $P=P'$ if and only if
$[\delta ,\delta ']>  [E_\alpha, \widetilde \delta]_Q \; [\delta ' ,\zeta_i] =
[E_\alpha, \widetilde \delta ']_Q \; [\delta, \zeta_i]$.
\end{enumerate}
\end{proposition}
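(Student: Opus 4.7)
The plan has two parts: first, apply Lemma~\ref{lemmaInterDiv} on the resolution $\pi$ of $\Lambda_i$ to convert the three global intersection multiplicities to local data at $Q$; second, analyze the local intersection $[\w\delta,\w{\delta'}]_Q$ by Newton--Puiseux in coordinates adapted to $E_\alpha$ and the curvette $\w\zeta_i$.

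Since $\w\delta$, $\w{\delta'}$ and $\w\zeta_i$ each meet $E$ only at $Q\in E_\alpha$, and $\w\zeta_i$ is a curvette (so $[E_\alpha,\w\zeta_i]_Q=1$), Lemma~\ref{lemmaInterDiv} yields
\begin{align*}
[\delta,\delta'] &= [\w\delta,\w{\delta'}]_Q+[E_\alpha,\w\delta]_Q\,\nu_\alpha(\delta'),\\
[\delta,\zeta_i] &= [\w\delta,\w\zeta_i]_Q+[E_\alpha,\w\delta]_Q\,\nu_\alpha(\zeta_i),\\
[\delta',\zeta_i] &= [\w{\delta'},\w\zeta_i]_Q+[E_\alpha,\w{\delta'}]_Q\,\nu_\alpha(\zeta_i).
\end{align*}
Applied to defining functions of $\delta'$ and of $\zeta_i$, Mumford's formula~(\ref{mumford}) determines $(\nu_\beta(\delta'))_\beta$ and $(\nu_\beta(\zeta_i))_\beta$ as the unique solutions of a linear system whose right-hand side has a single nonzero entry at $\alpha$, of values $[E_\alpha,\w{\delta'}]_Q$ and $1$ respectively. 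These vectors are therefore proportional, whence $\nu_\alpha(\delta')=[E_\alpha,\w{\delta'}]_Q\,\nu_\alpha(\zeta_i)$. Substituting and rearranging gives
$$
[\delta,\delta']-[E_\alpha,\w\delta]_Q\,[\delta',\zeta_i]=[\w\delta,\w{\delta'}]_Q-[E_\alpha,\w\delta]_Q\,[\w{\delta'},\w\zeta_i]_Q
$$
and the analogous identity after the exchange $\delta\leftrightarrow\delta'$. Hence the proposition reduces to the purely local claim at $Q$:
$$
[\w\delta,\w{\delta'}]_Q=\min\bigl\{[E_\alpha,\w\delta]_Q\,[\w{\delta'},\w\zeta_i]_Q,\;[E_\alpha,\w{\delta'}]_Q\,[\w\delta,\w\zeta_i]_Q\bigr\}
$$
when $P\neq P'$, with strict inequality when $P=P'$.

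To settle this local claim, I choose analytic coordinates $(u,v)$ at $Q$ with $E_\alpha=\{v=0\}$ and $\w\zeta_i=\{u=0\}$, and write Puiseux parametrizations $\w\delta:(t^m,\alpha t^n+\cdots)$ and $\w{\delta'}:(s^{m'},\alpha' s^{n'}+\cdots)$, with $\gcd(m,n)=\gcd(m',n')=1$. Then $[E_\alpha,\w\delta]_Q=n$, $[\w\zeta_i,\w\delta]_Q=m$, and similarly with primes. Substituting the parametrization of $\w\delta$ into a Weierstrass equation $v^{m'}-(\alpha')^{m'}u^{n'}+\cdots$ of $\w{\delta'}$, the dominant term in $t$ has order $\min(nm',n'm)$ unless the two leading monomials cancel exactly; a standard Newton-polygon argument then gives
$$
[\w\delta,\w{\delta'}]_Q\geq\min(nm',n'm),
$$
with equality if and only if $\w\delta$ and $\w{\delta'}$ disagree at the first Puiseux stage at $Q$, meaning that either the pairs $(m,n)$ and $(m',n')$ are distinct, or they coincide but the leading Puiseux coefficients differ. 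By the classical blowing-up interpretation of the first Puiseux pair this last condition is equivalent to $P\neq P'$ in the minimal modification $\sigma$; this proves part~(1) and the equivalence ``$P=P'\Leftrightarrow[\delta,\delta']>\min\{\cdots\}$'' of part~(2).

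Finally, assume $P=P'$. The coincidence of first Puiseux data at $Q$ gives $(m,n)=(m',n')$ and the same leading coefficient. Using Lemma~\ref{lemmaInterDiv} again one computes $I_0(y_i,\delta)=m+n\,\nu_\alpha(y_i)$ (since $\w\zeta_i$ is the only branch of $\lambda_i$ meeting $\w\delta$ at $Q$) and $I_0(x_i,\delta)=n\,\nu_\alpha(x_i)$ (the strict transform of the generic fibre $x_i$ misses $Q$), and the analogues for $\delta'$. As $E_\alpha$ is dicritical for $\Lambda_i$, $\nu_\alpha(y_i)=\nu_\alpha(x_i)$, so $q_i/p_i=1+m/(n\,\nu_\alpha(y_i))$ is unchanged under $\delta\leftrightarrow\delta'$; that is $q_i/p_i=q'_i/p'_i$ and $\Lambda_{i+1}=\langle y_i^{p_i},x_i^{q_i}\rangle=\Lambda'_{i+1}$. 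The shared leading Puiseux coefficient forces the constant $a_{i+1}$ selecting $y_{i+1}=y_i^{p_i}-a_{i+1}x_i^{q_i}$ to coincide with $a'_{i+1}$, so $y_{i+1}=y'_{i+1}$. The main technical obstacle is the Puiseux lower bound $[\w\delta,\w{\delta'}]_Q\geq\min(nm',n'm)$ and its sharpness criterion, which must cover the three tangent configurations of each branch at $Q$ (tangent to $E_\alpha$, tangent to $\w\zeta_i$, or transversal to both)---this asymmetry is precisely what forces the minimum of two products rather than a single product in the final formula.
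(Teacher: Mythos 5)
Your reduction to a local statement at $Q$ follows the paper's outline: both proofs exploit Lemma~\ref{lemmaInterDiv} (and you supply explicitly, via Mumford's linear system, the proportionality $\nu_\alpha(\delta')=[E_\alpha,\w{\delta'}]_Q\,\nu_\alpha(\zeta_i)$ that the paper uses silently). Where you diverge is in resolving the local claim: the paper observes that locally at $Q$, in coordinates $(u,v)$ adapted to $E_\alpha$ and $\w\zeta_i$, one is exactly in the situation of the ``First step'' for the pencil $\langle u,v\rangle$ and imports the dichotomy proved there; you instead run a direct Newton--Puiseux computation of $[\w\delta,\w{\delta'}]_Q$ and compare with $\min(nm',n'm)$. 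That is a legitimate alternative, arguably more self-contained, and the ``equality iff first Puiseux data differ at $Q$'' criterion together with the blow-up interpretation is the right way to tie it to $P\neq P'$.

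There is, however, a genuine gap in your treatment of part~(2). You assert that ``the shared leading Puiseux coefficient forces $a_{i+1}=a'_{i+1}$'' but give no argument, and the link between the coefficient $\alpha$ in $v=\alpha u^{n/m}+\cdots$ and the constant $a_{i+1}$ selecting the special member of $\Lambda_{i+1}$ is not immediate (it passes through the local expression of $\k{y_i}$ and $\k{x_i}$ near $Q$). The paper handles this cleanly: since $\nu_\beta(x_i^{q_i})=\nu_\beta(y_i^{p_i})$, the divisor $E_\beta$ through $P=P'$ is dicritical for $\Lambda_{i+1}$, and $a_{i+1}=\w\psi(P)=\w\psi(P')=a'_{i+1}$ for $\psi=y_i^{p_i}/x_i^{q_i}$. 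More importantly, your proof does not verify that the three hypotheses of the recursive step (\ref{recursive step}) -- $Q_{i+1}$ a smooth point, $E_\beta$ dicritical for $\Lambda_{i+1}$, and the existence of a unique curvette branch $\zeta_{i+1}$ of $\lambda_{i+1}$ there -- are satisfied at $P=P'$, which is precisely what licenses the phrase ``we proceed with a new iteration.'' The paper devotes most of part~(2) of its proof to exactly this verification; without it the recursion is not known to be well-founded. Supplying the dicriticality of $E_\beta$, the non-criticality of $P$, and the existence of $\zeta_{i+1}$ would close this gap.
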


\begin{remark}\label{rkE&e}
In \cite{TopIm} the next property is also included in the above list, as fourth property, for the recursive step: 

$\ll$
Let $m_i = [\delta,\zeta_i]$, the set
$\{\seq{m}{-1}{i-1}\}$ contains all the maximal contact values  of $\delta$
smaller than $m_i$. $\gg$

In our case one can add the corresponding one for 
$\delta'$, $m'_i = [\delta',\zeta_i]$. 
(See also the proof of Theorem \ref{thm1} for the definition of the integers $m_i$.)
Here we do not need it because the semigroup
of each branch is already computed in Theorem \ref{thm1}.
However it is interesting to notice that (see \cite{TopIm}) :\\
$[E_\alpha, \widetilde \delta]_Q = \gcd (m_{-1}, m_0, \ldots , m_{i-1}):=
e_{i-1}$ and so, for $P\neq P'$: 
$$[\delta ,\delta '] =\min \{ e_{i-1}
m_i', e'_{i-1}m_i \}
$$ 
\end{remark}

\medskip
The next lemma will be useful for the proof of Proposition \ref{IntersectionMult}.

\begin{lemma}\label{lemme3} With the above notations, we have the following
equivalence:
$$
\displaystyle\frac{q'_i}{p'_i}
>\frac{q_i}{p_i}   \Longleftrightarrow \frac{[\widetilde \delta ' ,\widetilde
\zeta_i]_Q}{[E_\alpha, \widetilde\delta ' ]_Q}>\frac{[\widetilde \delta  ,\widetilde
\zeta_i]_Q}{[E_\alpha, \widetilde\delta  ]_Q}.
$$
  \end{lemma}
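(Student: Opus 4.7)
The plan is to translate both sides of the claimed equivalence into expressions involving the local data at $Q$ via Lemma~\ref{lemmaInterDiv}, and then to compare them by a single subtraction.

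First I would unwind the definitions: by construction $q_i/p_i = I_0(y_i,\delta)/I_0(x_i,\delta)$ (and similarly with primes). The goal is therefore to compute $I_0(x_i,\delta)$ and $I_0(y_i,\delta)$ using the resolution $\pi$ above, keeping $Q$ as the distinguished point. Two geometric facts at $Q$ drive everything. Since $E_\alpha$ is dicritical for $\Lambda_i=\langle x_i,y_i\rangle$, the meromorphic function $y_i/x_i$ is non-constant on $E_\alpha$, so in particular $\nu_\alpha(x_i)=\nu_\alpha(y_i)$; call this common value $N>0$. At the specific point $Q$, the strict transform $\widetilde\delta$ meets $E_\alpha$ along the direction where $y_i/x_i$ vanishes, so the strict transform of $\{x_i=0\}$ does \emph{not} pass through $Q$, while the only branch of $\{y_i=0\}$ through $Q$ is precisely the curvette $\zeta_i$ (the remaining branches meet $E$ at other points).

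Applying Lemma~\ref{lemmaInterDiv} to $\delta$ at $P=Q$ then yields
\begin{align*}
I_0(x_i,\delta) &= [E_\alpha,\widetilde\delta]_Q \cdot N, \\
I_0(y_i,\delta) &= [\widetilde\zeta_i,\widetilde\delta]_Q + [E_\alpha,\widetilde\delta]_Q \cdot N,
\end{align*}
and the analogous formulas hold for $\delta'$. Dividing gives
\[
\frac{q_i}{p_i} \;=\; 1+\frac{[\widetilde\zeta_i,\widetilde\delta]_Q}{N\,[E_\alpha,\widetilde\delta]_Q},
\qquad
\frac{q'_i}{p'_i} \;=\; 1+\frac{[\widetilde\zeta_i,\widetilde\delta']_Q}{N\,[E_\alpha,\widetilde\delta']_Q}.
\]
Subtracting one from the other and using $N>0$ delivers the equivalence in the statement at once.

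The only non-routine point, and what I expect to be the main obstacle to present cleanly, is the identification of the local contributions at $Q$: one needs to justify rigorously that at $Q$ the strict transform of $\{x_i=0\}$ is absent and the strict transform of $\{y_i=0\}$ contributes exactly $[\widetilde\zeta_i,\widetilde\delta]_Q$ (and similarly for $\delta'$). Both reduce to the dicritical behaviour of $E_\alpha$ for $\Lambda_i$ and to the defining property of the curvette $\zeta_i$ at $Q$ already listed as properties (1)--(3) of the recursive step, so the remaining arithmetic is immediate.
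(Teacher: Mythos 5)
Your proof is correct and rests on the same pillar as the paper's: Lemma~\ref{lemmaInterDiv} applied at $Q$ on the minimal resolution of $\Lambda_i$, together with the observation that $\{x_i=0\}$ misses $Q$ while $\zeta_i$ is the only branch of $\{y_i=0\}$ through $Q$. The difference is one of bookkeeping rather than substance: you work directly with the whole fibre $\lambda_i$ (computing $I_0(y_i,\delta)$ in one shot via Lemma~\ref{lemmaInterDiv}) and exploit the dicriticality of $E_\alpha$ to get $\nu_\alpha(x_i)=\nu_\alpha(y_i)=N$, which yields the clean identity $q_i/p_i = 1 + [\widetilde\zeta_i,\widetilde\delta]_Q/(N[E_\alpha,\widetilde\delta]_Q)$; the paper instead decomposes $[\delta,\lambda_i]$ branch by branch, applies the lemma to each $\zeta\neq\zeta_i$ to show those contributions are identical for $\delta$ and $\delta'$, and lets the common factor $\nu_\alpha(x_i)$ cancel without ever identifying it with $\nu_\alpha(y_i)$. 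Your route is shorter and isolates the structural reason the comparison collapses to the $\zeta_i$-term; the paper's branch decomposition is more verbose but is the form reused later (Proposition~\ref{IntersectionMult}, Remark~\ref{rmk_a}, Theorem~\ref{mainth-plane}), where branch-level data genuinely matters. One small caveat, which you correctly flag as the delicate point and which the paper shares: property~(3) only asserts uniqueness of the \emph{curvette} at $Q$, so ruling out other (non-transversal) branches of $\lambda_i$ through $Q$ uses that $Q$ is not a critical point of the dicritical map on $E_\alpha$, as the paper establishes elsewhere (e.g.\ in the recursive step of Proposition~\ref{IntersectionMult}).
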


\begin{proof}
For each irreducible component $\zeta $ of $\lambda _i$ different from $\zeta_i$ one
has, from lemma \ref{lemmaInterDiv}, $[\delta , \zeta]= [E_\alpha, \widetilde
\delta ]_Q\nu_\alpha (\zeta) $ and $[\delta ', \zeta ]= [E_\alpha, \widetilde
\delta ' ]_Q\nu_\alpha (\zeta) $
and so $$[\delta , \zeta ]/[E_\alpha, \widetilde \delta ]_Q=[\delta ',
\zeta ]/[E_\alpha, \widetilde \delta ' ]_Q .$$
As $$ \displaystyle\frac{q_i}{p_i}  = \frac{[\delta ,\lambda_i ]}{I_0(\delta
,x_i)}=  \frac{[\delta ,\zeta_i]}{I_0(\delta ,x_i)}+\displaystyle\sum_{{\zeta
\in \lambda_i}, \ {\zeta\neq \zeta_i}} \frac{[\delta , \zeta]}{[E_\alpha,
\widetilde \delta ]_Q \; \nu_\alpha (x_i)}  
$$ 
and 
$$ \displaystyle\frac{q'_i}{p'_i}  =
\frac{[\delta ' ,\lambda_i]}{I_0(\delta' ,x_i)}= \frac{[\delta '
,\zeta_i]}{I_0(\delta ',x_i)} +\displaystyle\sum_{{\zeta \in \lambda_i}, \
{\zeta\neq \zeta_i}} \frac{[\delta ', \zeta]}{[E_\alpha, \widetilde \delta ']_Q\; \nu_\alpha (x_i)}  ,$$ 
one has
\begin{align*}
\displaystyle\frac{q'_i}{p'_i} >\frac{q_i}{p_i}   & \Longleftrightarrow
\frac{[\delta ' ,\zeta_i]}{I_0(\delta ',x_i)}>  \frac{[\delta
,\zeta_i]}{I_0(\delta ,x_i)}
 \Longleftrightarrow \frac{[\delta ' ,\zeta_i]}{q_0\ldots q_{i-1} I_0(\delta
',x)}>  \frac{[\delta ,\zeta_i]}{q_0\ldots q_{i-1} I_0(\delta ,x)} \\
& \Longleftrightarrow \frac{[\widetilde\delta ' ,\widetilde\zeta_i]_Q+ [E_\alpha,
\widetilde \delta ' ]_Q\; \nu_\alpha(\zeta_i)}{ [E_\alpha, \widetilde \delta ' ]_Q }>
\frac{[\widetilde\delta ,\widetilde\zeta_i]_Q+ [E_\alpha, \widetilde
\delta]_Q\; \nu_\alpha(\zeta_i)}{ [E_\alpha, \widetilde \delta]_Q} \\
& \Longleftrightarrow
\frac{[\widetilde \delta ' ,\widetilde \zeta_i]_Q}{[E_\alpha, \widetilde\delta '
]_Q}>\frac{[\widetilde \delta  ,\widetilde \zeta_i]_Q}{[E_\alpha, \widetilde\delta
]_Q}\; .
\end{align*}
           
\end{proof}

\begin{proof}[{\bf Proof of Proposition \ref{IntersectionMult}}] \

{\bf 1)} Let us assume $P\neq P'$.
We start by computing $[\widetilde\delta , \widetilde\delta ']_{Q}$. Taking into
account that $\widetilde \zeta_i$ and $E_\alpha$ are both smooth and intersect
transversaly at $Q$,
we can choose a pair of local coordinates $(u,v)$ at $(X,Q)$ in such a way that
$u=0$ and $v=0$ are the (local) equations of $E_\alpha$ and
$\widetilde \zeta_i$ respectively.
Thus one has locally, at
$Q\in X$, exactly the same situation as in the First step \ref{first step} for the pencil
$\langle u, v\rangle$ instead $\Lambda = \langle x, y\rangle$.
Thus, we have:
$$
P\neq P'  \Longleftrightarrow [\widetilde\delta ,\widetilde\delta ']_Q =
\min \{[E_\alpha , \widetilde \delta ]_Q[\widetilde\delta ' ,\widetilde\zeta_i]_Q, [E_\alpha ,
\widetilde \delta '  ]_Q[\widetilde\delta ,\widetilde\zeta_i]_Q\}\, .
$$
Now one has to compute  $[\delta ,\delta '] $. By lemma \ref{lemmaInterDiv} we
have :
$$[\delta ,\delta '] =[\widetilde \delta , \widetilde \delta ']_Q+  [E_\alpha ,
\widetilde \delta ' ]_Q\; \nu_\alpha (\delta) =[\widetilde \delta , \widetilde \delta']_Q
+  [E_\alpha , \widetilde \delta  ]_Q\; \nu_\alpha (\delta ') .$$

Let assume that $[E_\alpha ,\widetilde \delta']_Q[\widetilde\delta
,\widetilde\zeta_i]_Q < [E_\alpha , \widetilde \delta   ]_Q[\widetilde\delta '
,\widetilde\zeta_i]_Q$,
(from lemma \ref{lemme3} this is equivalent to $q_i'/p_i'>q_i/p_i$) then
$$
\begin{array}{cclr}
[\delta ,\delta ']& =&[E_\alpha , \widetilde \delta '  ]_Q[\widetilde\delta
,\widetilde\zeta_i]_Q +  [E_\alpha , \widetilde \delta ' ]_Q\nu_\alpha (\delta)&\\
&= & [E_\alpha , \widetilde \delta ' ]_Q \left([\widetilde\delta ,\widetilde\zeta_i]_Q
+ \nu_\alpha (\delta) \right)&\\
&=& [E_\alpha , \widetilde \delta '  ]_Q[\delta ,\zeta_i] <
[E_\alpha , \widetilde \delta ]_Q[\delta' ,\zeta_i]  .  &
\end{array}
$$
 Notice that in case of $q_i/p_i=q_i'/p_i'$ one has $[\delta ,\delta
']=[E_\alpha , \widetilde \delta '  ]_Q[\delta ,\zeta_i]= [E_\alpha , \widetilde
\delta   ]_Q[\delta ' ,\zeta_i]$.

\medskip

{\bf 2)} Let $P=P'\in E_\beta\subset E'$. The strict transform of $\zeta_i$ by
$\sigma$ does not intersects $E_\beta$ at $P$ and obviously the same is true for all
the branches of $\lambda_i = \{y_i=0\}$.
So by Lemma \ref{lemmaInterDiv} we have
$[\lambda_i,\delta]= [E_\beta, \w \delta]_P\; \nu_\beta(y_i)$ and
$[\lambda_i,\delta']= [E_\beta, \w \delta']_P\; \nu_\beta(y_i)$.
The same equalities are also true for $x_i$ instead $y_i$.
Then
$$
\frac{q_i}{p_i}= \frac{[\lambda_i,\delta]}{I_0(x_i,\delta)} =
\frac{\nu_\beta(y_i)}{\nu_\beta(x_i)} =
\frac{[\lambda_i,\delta']}{I_0(x_i,\delta')} =
\frac{q'_i}{p'_i}
$$
and as a consequence $\Lambda_{i+1} = \Lambda'_{i+1}$.
As $\nu_\beta(x_i^{q_i})=\nu_\beta(y_i^{p_i})$, the lifting of the meromorphic
function $\psi  = y_i^{p_i}/x_i^{q_i}$ is not equal to zero along $E_\beta$ and has a
zero at the point $P$, thus $E_\beta$ is a dicritical divisor of $\Lambda_{i+1}$.
As in the proof of Lemma 1 of \cite{TopIm} one can see that $Q$ is not a critical
point of $\Lambda_{i+1}$ and so one has also that there exists an unique fibre
$y_{i+1} = y_i^{p_i}- a x_i^{q_i}\in \Lambda_{i+1}$ ($a = \w \psi(P) \in \C^*$)  such
that $I_0(y_{i+1}, \delta)> I_0(y_i^{p_i},\delta)$ and
$I_0(y_{i+1}, \delta')> I_0(y_i^{p_i},\delta')$.
The fact that $E_\beta$ is dicritical and $P\in E_\beta$ is not a critical point
implies that there exists an unique branch $\zeta_{i+1}$ of
$\lambda_{i+1}=\{y_{i+1}=0\}$ such that its strict transform is a curvette at the
point $P$.

Notice that, if $\sigma$ is not the minimal resolution of $\Lambda_{i+1}$ then the
new blowing-ups up to reach it do not affect the described situation on
$P\in E_\beta$. 

 Notice also that in this case we have $[\delta ,\delta
']>[E_\alpha , \widetilde \delta '  ]_Q[\delta ,\zeta_i]= [E_\alpha , \widetilde
\delta   ]_Q[\delta ' ,\zeta_i]$.
\end{proof}

\medskip
\begin{remark}\label{rmk_a}
Let us summarize the case when $P\neq P'$ and $q_i/p_i=q_i'/p_i'$.
Note that at this step, $q_i/p_i=q_i'/p_i'$ is equivalent to say
$\Lambda_{i+1}=\Lambda_{i+1}'$. In this case $P$ and $P'$ belong to the same
dicritical component $E_\beta \subset E'$ of   the minimal resolution of
$\Lambda_{i+1}$.
Let
$y_{i+1}= y_i^{p_i}-ax_i^{q_i}$, $\lambda_a=\{y_{i+1}=0\}$
 (resp.  $y'_{i+1} = y_i^{p_i}-a'x_i^{q_i}$,
$\lambda_{a'}=\{y'_{i+1}=0\}$)
be the unique fibre such that
$I_0(\delta , \lambda_a )> I_0(\delta , \lambda)$ (resp.
$[\delta' , \lambda_{a'} ]> [\delta' , \lambda ]$), for any other fibre $\lambda
$ of $\Lambda_{i+1}$. Thus we have two possibilities:

Either $a\neq a'$, and in this case $y_{i+1}\neq y'_{i+1}$ and
$\Lambda_{i+2}\neq \Lambda_{i+2} '$.

Otherwise $a=a'$. In this case there exists $\zeta , \zeta '$ irreducible components
of the same fibre
$\lambda_a$ such that the strict transform $\w{\zeta}$ of $\zeta$ (resp. $\w{\zeta'}$
of $\zeta'$) meets $E_{\beta}$ at $P$ (resp. at $P'$).
By using lemma \ref{lemmaInterDiv} it is easy to
check that it is equivalent to
$$
\frac{[\delta  , \zeta]}{[E_\beta,\widetilde\delta  ]}>\frac{[ \delta ' ,
\zeta]}{[E_\beta,\widetilde\delta ' ]} \left(\mbox{ resp. }  \frac{[\delta ' ,
\zeta']}{[E_\beta, \widetilde\delta ' ]}>\frac{[ \delta , \zeta
']}{[E_\beta,\widetilde\delta  ]} \right) \; .\ \ \ \ \ (*)
$$
As a consequence, always  using lemma   \ref{lemmaInterDiv} we have
$[\delta ,\delta '] = \nu_{\beta}(\delta) [E_\beta, \tilde\delta ' ]$
and as $\tilde\delta \cap \tilde \zeta '=\emptyset$ and $\zeta '$ is transversal to $E_\beta$, $\nu_{\beta}(\delta)=[\delta  ,\zeta ']$ and  we obtain
$$
[\delta ,\delta '] = [E_\beta, \widetilde \delta '][\delta  ,\zeta '] \; .
$$

In the same way we have $[\delta ,\delta '] = \nu_{\beta}(\delta ') [E_\beta,
\tilde\delta  ] = [E_\beta, \widetilde \delta   ][\delta ' ,\zeta ]$.

\end{remark}

\medskip

From the Proposition \ref{IntersectionMult} and its proof it is obvious that
the maximal integer $\kappa$ such that the requirements of the General recursive step are
satisfied can be determined in a more easy way: let
${\cal P}(x,y, \delta) = \{(\Lambda_i,
y_i, q_i/p_i) : i\ge 0\}$
(resp.
${\cal P}(x,y, \delta') = \{(\Lambda'_i,
y'_i, q'_i/p'_i) : i\ge 0 \}$ )
be the sequence of iterated pencils for $\delta$ (resp. for $\delta'$).
Then one has:

\begin{statement}\label{stat}
Let $\kappa$ be the largest integer such that, if
$\pi: (X,E)\to (\C^2,0)$ is the minimal resolution of the pencil $\Lambda_\kappa$, then
the  strict transforms of $\delta$ and $\delta'$ intersects $E$ at the same point
$Q\in E_\alpha\subset E$. Then $E_{\alpha}$ is a dicritical component for
$\Lambda_\kappa$ and $Q$ is not a critical point of $\Lambda_\kappa$.

Moreover,
if $\zeta$ is the unique branch of $\lambda_\kappa = \{y_\kappa=0\}$ such that
its strict transform by $\pi$ intersects $E$ at the point $Q$ one has:
$$
[\delta ,\delta '] = \min \{ [E_\alpha, \widetilde \delta]_Q \; [\delta'
,\zeta], [E_\alpha, \widetilde \delta ']_Q\; [\delta ,\zeta]\}\; .
$$
\end{statement}

\begin{proof}
The proof goes by induction on $\kappa$, the case $\kappa=0$ is just
the first step \ref{first step} and the inductive step is the General recursive
step
\ref{recursive step}.
\end{proof}

Although the above statement completely solves the problem of determining the
intersection multiplicity of $\delta$ and $\delta'$, the computation of $\kappa$ requires
the resolution of the iterated pencils. Let us see that this computation can be set
up in another way.

Let $\{m_i, i\ge -1\}$ and $\{m'_i, i\ge -1\}$ be the sequences defined in the proof
of Theorem \ref{thm1} and Remark \ref{rmk_m} for $\delta$ and $\delta'$. For $i\ge
-1$, let $e_i= \gcd (\seq{m}{-1}i)$ and $e'_i= \gcd (\seq{m'}{-1}i)$.

\begin{theorem}\label{mainth-plane}
Let $\ell$ be the smallest integer such that
there exists a branch $\zeta$ of the fibre $\lambda_\ell = \{y_\ell =0\}$ of
$\Lambda_{\ell}$ such that
$[\delta, \zeta]/I_0(\delta,x)\neq [\delta', \zeta]/I_0(\delta',x)$. Then:
$$
[\delta, \delta'] = \min \{ e_{\ell-1} [\delta',\zeta],
e'_{\ell-1} [\delta,\zeta]\}\; .
$$
\end{theorem}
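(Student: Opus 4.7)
The plan is to identify $\ell$ with the integer $\kappa$ of Statement~\ref{stat}, or with $\kappa+1$ according to whether the divergence of $\delta$ and $\delta'$ at step $\kappa+1$ happens at the level of the slopes $q/p$ or at the level of the special fibres; in either case the theorem's formula will reduce to the one already established in Statement~\ref{stat}.

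\emph{Step 1 (propagation).} I will show by induction on $i$ that, as long as the ratio identity $[\delta,\zeta]/I_0(\delta,x)=[\delta',\zeta]/I_0(\delta',x)$ holds for every branch $\zeta$ of every $\lambda_j$ with $j<i$, the iterated pencils of $\delta$ and $\delta'$ coincide through step $i$: $\Lambda_j=\Lambda_j'$ and $y_j=y_j'$ for $j\le i$, and $q_j/p_j=q_j'/p_j'$ for $j\le i-1$. The key tool is Lemma~\ref{lemmaInterDiv} together with the identification $e_{j-1}=[E_{\alpha_j},\tilde\delta]_{Q_j}$ recalled in Remark~\ref{rkE&e}: for every branch $\zeta$ of $\lambda_j$ one has $[\delta,\zeta]=[\tilde\delta,\tilde\zeta]_{Q_j}+e_{j-1}\,\nu_{\alpha_j}(\zeta)$ (first term nonzero only for $\zeta=\zeta_j$, the curvette at the common point $Q_j=\tilde\delta\cap E=\tilde\delta'\cap E$), and the analogous formula holds for $\delta'$. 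The ratio identities then translate into $e_{j-1}/m_{-1}=e_{j-1}'/m_{-1}'$ and $m_j/m_{-1}=m_j'/m_{-1}'$, and the numerical relations $m_j/m_{-1}=(q_0\cdots q_{j-1})(q_j/p_j)$ and $e_j=m_{-1}/(p_0\cdots p_j)$ allow one to read off $q_j/p_j=q_j'/p_j'$ step by step, forcing $\Lambda_{j+1}=\Lambda_{j+1}'$ and $y_{j+1}=y_{j+1}'$.

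\emph{Step 2 (analysis at step $\ell$).} Step~1 applied at $i=\ell$ gives $\Lambda_\ell=\Lambda_\ell'$, but since the ratio identity fails at step $\ell$ at least one of the equalities $y_\ell=y_\ell'$ and $q_\ell/p_\ell=q_\ell'/p_\ell'$ must break. In \emph{Case~A} ($y_\ell=y_\ell'$ and $q_\ell/p_\ell\ne q_\ell'/p_\ell'$) the curvette $\zeta_\ell$ of $\lambda_\ell$ itself witnesses the inequality, one has $\ell=\kappa$, and the theorem's formula with $\zeta=\zeta_\ell$ is the verbatim statement of Statement~\ref{stat}. In \emph{Case~B} ($y_\ell\ne y_\ell'$) one has $\kappa=\ell-1$, the strict transforms of $\delta,\delta'$ separate only after resolving $\Lambda_\ell$, and any branch $\zeta$ of $\lambda_\ell$ (special for $\delta$ and generic for $\delta'$) satisfies the required ratio inequality.

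\emph{Step 3 (formula in Case~B).} Taking $\zeta$ to be the curvette of $\lambda_\ell$ at the new point of $\tilde\delta$, one has $[\delta',\zeta]=p_{\ell-1}\,m_{\ell-1}'$ (the generic value for $\delta'$ in $\Lambda_\ell$), and the identity $e_{\ell-1}=e_{\ell-2}/p_{\ell-1}$ yields
\[
e_{\ell-1}\,[\delta',\zeta]=\frac{e_{\ell-2}}{p_{\ell-1}}\,p_{\ell-1}\,m_{\ell-1}'=e_{\ell-2}\,m_{\ell-1}',
\]
which matches one argument of the min in Statement~\ref{stat} applied at step $\kappa=\ell-1$. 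A symmetric computation, combined with the strict inequality $m_\ell>p_{\ell-1}m_{\ell-1}$ (valid because $y_\ell$ is special for $\delta$) and the equality $m_{\ell-1}m_{-1}'=m_{\ell-1}'m_{-1}$ holding at step $\ell-1$, shows that the other argument of the theorem's min strictly exceeds its counterpart in Statement~\ref{stat}, so the two mins coincide. The main obstacle is the arithmetic bookkeeping in Step~1, namely reversing the recursion that defines $\{m_j\}$ and $\{e_j\}$ in order to extract $q_j/p_j=q_j'/p_j'$ from the ratio equalities for \emph{all} branches of the $\lambda_j$'s; a secondary subtlety, invisible at the level of the statement, is the one-step index shift between $\ell$ and $\kappa$ in Case~B.
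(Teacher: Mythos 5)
Your overall strategy — reduce to Statement~\ref{stat} by identifying $\ell$ with $\kappa$ or $\kappa+1$ — is the same one the paper follows, and your Step~1 is essentially the converse direction of the paper's observation that the ratio identity propagates for $i<\kappa$. However, there is a genuine gap in Step~2. Your case analysis assumes that ``since the ratio identity fails at step $\ell$, at least one of the equalities $y_\ell=y_\ell'$ and $q_\ell/p_\ell=q_\ell'/p_\ell'$ must break.'' This is false, and the missing third case is precisely the subtle one this theorem is designed to capture. It corresponds to the sub-case $a=a'$ of Remark~\ref{rmk_a}: the strict transforms of $\delta$ and $\delta'$ separate on the dicritical divisor $E_\beta$ of $\Lambda_{\kappa+1}=\Lambda_\ell$, but they do so by attaching to \emph{different branches} $\zeta,\zeta'$ of the \emph{same} special fibre $\lambda_\ell=\lambda_\ell'$. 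In this situation one has $y_\ell=y_\ell'$, and the inequalities $[\delta,\zeta]/e_{\ell-1}>[\delta',\zeta]/e'_{\ell-1}$ and $[\delta',\zeta']/e'_{\ell-1}>[\delta,\zeta']/e_{\ell-1}$ go in opposite directions, so the fibre-level ratios $q_\ell/p_\ell$ and $q'_\ell/p'_\ell$ may very well coincide (and the entire sequences $\{m_i\}$ and $\{m'_i\}$ as well). Example~\ref{ex1} of the paper exhibits exactly this: the two branches have identical sequences of pencils, fibres, slopes $q_i/p_i$ and semigroup generators $m_i$, yet $\ell=2$ because the two branches $\zeta_1,\zeta_2$ of $\lambda_2$ discriminate. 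Your Cases A and B therefore do not cover this situation, and neither of your formula computations in Step~3 applies to it.

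There is also an internal tension in your write-up: Step~1 as stated propagates $y_j=y_j'$ for $j\le i$, which, applied at $i=\ell$, would force $y_\ell=y_\ell'$ and make your Case~B vacuous. The propagation should only give $y_j=y_j'$ for $j<i$ (together with $\Lambda_i=\Lambda_i'$); otherwise, as you have it, the whole case distinction in Step~2 collapses. Fixing that index is not hard, but the missing case described above is the real obstruction: it requires invoking Remark~\ref{rmk_a} (case $a=a'$) and the computation there — $[\delta,\delta']=[E_\beta,\widetilde\delta'][\delta,\zeta']=[E_\beta,\widetilde\delta][\delta',\zeta]$ with $[E_\beta,\widetilde\delta]=e_{\ell-1}$, $[E_\beta,\widetilde\delta']=e'_{\ell-1}$ — to obtain the theorem's formula, and this cannot be derived from the slope or fibre data alone.
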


\begin{proof}
Let $\kappa$ be the integer defined in Statement \ref{stat}, then one has that
$[\delta, \zeta]/e_{i-1}= [\delta', \zeta]/e'_{i-1}$ for all branches
$\zeta$ of the fibres $\{y_i=0\}$ for $i< \kappa$.
Let $\ell$ be equal to $\kappa+1$ if we are in the case $a=a'$ described in Remark
\ref{rmk_a} and let $\ell=\kappa$ otherwise.
Proposition \ref{IntersectionMult} and Remark \ref{rmk_a} implies that
there exists a branch $\zeta$ of $\lambda_\ell=\{y_\ell=0\}$ such that
$[\delta, \zeta]/e_{i-1}\neq [\delta', \zeta]/e'_{i-1}$.
Then, the integer $\ell$ could be defined as the smallest such that
$[\delta, \zeta]/e_{i-1}\neq[\delta', \zeta]/e'_{i-1}$
for some branch $\zeta$ of $\lambda_\ell$.
The same results implies the stated equality for
$[\delta,\delta']$.

Now, it is well known that, in our conditions, $d_i= e_{i-1}/e_i =
e'_{i-1}/e'_i=d'_i$ for $i\le \ell-1$ (see e.g. \cite{D-Lille}). Moreover,
$e_{-1} = m_{-1} = I_0(\delta, x)$ and
$e'_{-1} = m'_{-1} = I_0(\delta', x)$.
Then for the integer $\ell$ we have just defined above
one has that also $\ell$ is  the smallest integer such that
$[\delta, \zeta]/I_0(\delta, x) \neq [\delta', \zeta]/I_0(\delta',x)$.
\end{proof}        

\begin{remark}
Notice that the formula for $[\delta,\delta']$ in the above Theorem is not the
same as in Remark \ref{rkE&e}. Let us assume that we are in the case $\ell=\kappa+1$,
then one has that $[\delta, \delta']=e_{\kappa-1} m'_\kappa
=e'_{\kappa-1}m_\kappa$ and also
$[\delta, \delta'] = \min \{ e_{\ell-1} [\delta',\zeta],
e'_{\ell-1} [\delta,\zeta]\}$.
However $m'_{\ell} \neq [\delta',\zeta]$ because $\zeta$ does not go by $P'$. In
fact it could happen even that
$m'_\ell/e'_{\ell-1} = m_{\ell}/e_{\ell-1}$ (see the Example below). Thus in order to
detect the integer $\ell$ (and so $\kappa$) it does not suffice with the sequences
$\{m_i\}$ and $\{m'_i\}$.
\end{remark}

\begin{remark}
If there exists an integer $r$ such that
$\Lambda_r\neq \Lambda_r'$ then there
exists $\zeta$, and $\zeta '$ branches of $\lambda_r = \{y_r=0\}$ and
$\lambda_r'=\{y'_r=0\}$ such that
$\widetilde \delta \cap \widetilde \zeta \neq \emptyset$ and
$\widetilde \delta ' \cap \widetilde \zeta'\neq \emptyset$
respectively. Then, one has:
$$
%I_0(\delta ,\delta ') = [E_{\gamma'}, \widetilde \delta '  ]I_0(\delta  ,\zeta
%')=[E_\gamma , \widetilde \delta  ]I_0(\delta ' ,\zeta )
[\delta ,\delta '] = e'_{r-1}[\delta  ,\zeta
']=e_{r-1} [\delta ' ,\zeta ]\; .
$$

Note that the only condition is that $\Lambda_r\neq\Lambda_r '$. In particular the
branches $\delta, \delta'$ could be separated before the step $r$.
\end{remark}

\begin{example}\label{ex1}
The following example from \cite{TopIm} shows that the use we made of the branches of
the fibres and not only of the fibres themselves is required.
Let
$\delta = \{4y^2-4x^3-4yx^3+x^6=0\}$,
$\delta' = \{4y^2-4x^3+4yx^3+x^6=0\}$.
The Puiseux expansion of them are
$x = t^2, y =t^3+ (1/2)t^6$ and
$x = t^2, y =t^3 - (1/2)t^6$ respectively. Their intersection multiplicity
$[\delta, \delta ']$ is equal to 9.
As one can prove in an easy way the sequence of pencils and fibres for both are the
same (as well as the sequences
$\{m_i\}$ and $\{m'_i\}$). For the first steps one
founds:
\begin{eqnarray*}
y_1 &= &y^2-x^3  \\
y_2 &= &(y^2-x^3)^2-x^9  \\
y_3 &= &((y^2-x^3)^2-x^9)^6-(1/64)x^{63} \\
y_4 &= &(((y^2-x^3)^2-x^9)^6-(1/64)x^{63})^{42}-(3/256)^{42}(x^{63})^{43}\; .
\end{eqnarray*}
and the sequences of $m_i =m_i'$ is $(m_{-1}=2,m_0=3, m_1=9,m_2=12,
m_3=15)$ and
$(q_0/p_0= q_0'/p_0'= 3/2, q_1/p_1= q_1'/p_1'= 3/2 , q_2/p_2= q_2/p_2'= 7/6, \ldots  )$.
Note that $m_{-1}=2$ and $m_0=3$ are enough to compute the semigroup of
$\delta$ (see Corollary \ref{cor1}):  the one generated by $2,3$. The same is true
for
$\delta'$.

The fibre $y_1$ is irreducible. But at the second step, the fibre $y_2$ has two
branches $\zeta_1,\zeta_2$ such that  $[\delta, \zeta_1]= 12$ and $[\delta,
\zeta_2]= 9$ and inversely $[\delta', \zeta_1]= 9$ and $[\delta', \zeta_2]= 12$.
Thus following Theorem \ref{mainth-plane} we have that $\ell=2$
and
$$
[\delta,\delta'] = \min\{ e_{1}[\delta',\zeta], e'_1 [\delta,\zeta']\} =
\min\{ 1 \cdot 9, 1\cdot 9\} = 9\;.
$$

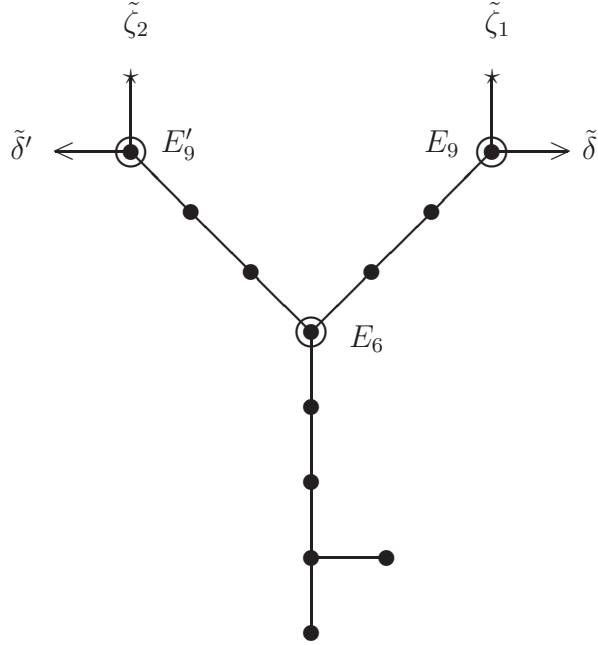
\begin{figure}[h]
$$
\unitlength=1.00mm
\begin{picture}(30.00,90.00)
\thicklines
%primer nivel
\put(20,10){\line(0,1){40}}
\put(20,10){\circle*{2}}
\put(20,20){\circle*{2}}
\put(20,30){\circle*{2}}
\put(20,40){\circle*{2}}
\put(20,50){\circle*{2}}
\put(20,50){\circle{4}}
\put(25,48){$E_6$}

\put(20,20){\line(1,0){10}}
\put(30,20){\circle*{2}}

\put(20,50){\line(-1,1){24}}
\put(20,50){\line(1,1){24}}

\put(28,58){\circle*{2}}
\put(36,66){\circle*{2}}
\put(44,74){\circle*{2}}
\put(44,74){\circle{4}}
\put(35,74){$E_9$}

\put(12,58){\circle*{2}}
\put(4,66){\circle*{2}}
\put(-4,74){\circle*{2}}
\put(-4,74){\circle{4}}
\put(0,74){$E'_9$}

\put(44,74){\line(1,0){10}}
\put(44,74){\line(0,1){10}}
\put(43,83){$\star$}
\put(51.5,73){$>$}
\put(43,90){$\tilde \zeta_1$}
\put(56,73){$\tilde \delta$}

\put(-4,74){\line(-1,0){10}}
\put(-4,74){\line(0,1){10}}
\put(-5,83){$\star$}
\put(-14.5,73){$<$}
\put(-5,90){$\tilde \zeta_2$}
\put(-20,73){$\tilde \delta '$}
\end{picture}
$$
\caption{Graph of the minimal resolution of $\delta \cup \delta'\cup \{y_2=0\}$
}\label{fig1}
\end{figure}

In such a way, as moreover  $[\delta, \delta ']=9$, at the sixth blow-up
(the
corresponding divisor $E_6$ is the dicritical one of the pencil $\Lambda_2$, see Figure \ref{fig1})
the strict transforms of $\delta $ and $\delta ' $ separate, as do the ones of
$\zeta_1$ and $\zeta_2$, while the stricts transforms of $\delta$ and  $\zeta_1$,
resp. of $\delta'$ and  $\zeta_2$, separate three blow-ups later (the
corresponding divisors $E_9$ and $E'_9$ are dicritical for the pencil $\Lambda_3$).

With the above notations it means that we have $P\neq P'$ at $E_6$
although the pencils of $\delta $ and $\delta'$ are the same.

The resolution graph of the minimal resolution of $\delta \cup \delta'\cup
\{y_2=0\}$ is depicted in Figure \ref{fig1}.
\end{example}

\subsection{Case of a normal singularity $(Z,z)$}

Let $\gamma, \gamma'$ be two irreducible curves in $(Z,z)$ and $\varphi
(\gamma)=\delta $ and $\varphi (\gamma ')=\delta' $ their image in the plane $\C^2$.
We will assume that $\delta  \neq \delta'$.
Recall that $\varphi_* \gamma =k\delta$ and $\varphi _* (\gamma ')=k'\delta ' $
for some positive integers $k,k'$.

Let $\PP(f,g,\gamma) = \{(\Phi_i,g_i,q_i/p_i) : i\ge 0\}$
and
$\PP(f,g,\gamma') = \{(\Phi'_i,g'_i,q'_i/p'_i) : i\ge 0\}$ be the iterated sequence
of pencils of the branches $\gamma$ and $\gamma'$ w.r.t. $\varphi=(f,g)$.
%Let us denote $\xi =\{g_i=0\}$ and $\xi' =\{g'_i=0\}$.

\begin{theorem}\label{TheoremPrincipal}
Let $\ell$ be the smallest integer such that
there exists a branch $\rho$ of the fibre $\xi = \{g_\ell =0\}$ of
$\Phi_{\ell}$ such that
$$
\displaystyle\frac{I_z(\gamma , \varphi ^* (\varphi (\rho)))}{I_z(\gamma,f)}\neq
\displaystyle\frac{I_z(\gamma ', \varphi ^* (\varphi (\rho)))}{I_z(\gamma',f)}\; .
$$
Then:
$$
[\delta, \delta ']= min \left\{
  \displaystyle\frac{e_{\ell-1}'I_z(\gamma ,
\varphi ^* (\varphi (\rho)))}{k} ,
\displaystyle\frac{e_{\ell-1}I_z(\gamma ',
\varphi ^* (\varphi (\rho)))}{k'}
\right\}\; .
$$
\end{theorem}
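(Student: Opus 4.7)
The plan is to reduce to the planar statement (Theorem \ref{mainth-plane}) applied to $\delta$, $\delta'$ under the identity map on $(\C^2,0)$, and then to pull back via $\varphi$ using the projection formula (Proposition \ref{proj-for}). The dictionary, already used in the proof of Theorem \ref{thm1}, is that $\Phi_i = \varphi^*\Lambda_i$ and $g_i = \varphi^*y_i$, and similarly for the primed objects; by Proposition \ref{proj-for} the ratios $q_i/p_i$ agree above and below. Since $\varphi$ is a finite surjective morphism, $\{g_\ell=0\}=\varphi^*\{y_\ell=0\}$ as Cartier divisors, and the images $\varphi(\rho)$, as $\rho$ runs over the branches of $\{g_\ell=0\}$, exhaust the branches of $\{y_\ell=0\}$ in $(\C^2,0)$.

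First I would translate the ratio condition defining $\ell$. For a branch $\rho$ of $\{g_\ell=0\}$ set $\zeta=\varphi(\rho)$ and let $h_\rho$ be a local equation for $\zeta$. Applying Proposition \ref{proj-for} to $\varphi^* h_\rho$ and to $x$ yields
\[
\frac{I_z(\gamma,\varphi^*(\varphi(\rho)))}{I_z(\gamma,f)} \;=\; \frac{k\,[\delta,\zeta]}{k\,I_0(\delta,x)} \;=\; \frac{[\delta,\zeta]}{I_0(\delta,x)},
\]
and analogously for $\gamma'$, $\delta'$. Hence our condition on $\ell$ becomes the inequality $[\delta,\zeta]/I_0(\delta,x) \neq [\delta',\zeta]/I_0(\delta',x)$ for some branch $\zeta$ of $\{y_\ell=0\}$, which is precisely the characterization of $\ell$ in Theorem \ref{mainth-plane}. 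Because $\varphi$ is surjective on the branches of these fibres, the smallest $\ell$ detected upstairs and downstairs coincide.

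Then I would apply Theorem \ref{mainth-plane} directly to obtain
\[
[\delta,\delta'] \;=\; \min\bigl\{e_{\ell-1}\,[\delta',\zeta],\; e'_{\ell-1}\,[\delta,\zeta]\bigr\},
\]
and substitute $[\delta,\zeta]=I_z(\gamma,\varphi^*(\varphi(\rho)))/k$ and $[\delta',\zeta]=I_z(\gamma',\varphi^*(\varphi(\rho)))/k'$ (again Proposition \ref{proj-for}) to recover the claimed formula. The integers $e_i,e'_i$ are read in the sense of Remark \ref{rmk_m}: they may be computed either in $(Z,z)$, from $\mu_i=I_z(g_i,\gamma)$ divided by $k$, or in $(\C^2,0)$, from $m_i=I_0(y_i,\delta)$, and the two computations agree thanks to Proposition \ref{proj-for}.

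The only real obstacle is bookkeeping: one must verify that the $\ell$ detected upstairs via branches $\rho$ of $\{g_\ell=0\}$ coincides with the $\ell$ of Theorem \ref{mainth-plane} detected via branches $\zeta$ of $\{y_\ell=0\}$, and that the invariants $e_i,e'_i$ are unaffected by passing between $(Z,z)$ and $(\C^2,0)$. Both points rest on the finiteness and surjectivity of $\varphi$ together with the projection formula; once these are in place the theorem follows by direct substitution, with no further geometric input beyond what is already encoded in Theorem \ref{mainth-plane}.
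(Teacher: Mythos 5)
Your proposal is correct and follows essentially the same route as the paper: identify $\Phi_i = \varphi^*\Lambda_i$ and $g_i = \varphi^*y_i$, translate the ratio condition defining $\ell$ via the projection formula so that it matches the hypothesis of Theorem \ref{mainth-plane}, and then pull back the resulting equality for $[\delta,\delta']$. Your extra remark that branches of $\{g_\ell=0\}$ surject onto branches of $\{y_\ell=0\}$ (so the minimal $\ell$ upstairs and downstairs agree) is a point the paper leaves implicit, and is a worthwhile clarification; only note that $m_i$ is not literally $I_0(y_i,\delta)=\mu_i/k$ — they differ, but Remark \ref{rmk_m} shows their running gcds coincide, which is all that is needed for $e_i$.
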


\begin{proof}
Let
${\cal P}(x,y, \delta) = \{(\Lambda_i,
y_i, q_i/p_i) : i\ge 0\}$
(resp.
${\cal P}(x,y, \delta') = \{(\Lambda'_i,
y'_i, q'_i/p'_i) : i\ge 0 \}$ )
be the sequence of iterated pencils for $\delta$ (resp. for $\delta'$).
As a consequence of Proposition \ref{proj-for}, for $j\ge 0$,
$(\varphi^* (\Lambda_j), \varphi^* y_j,  q_j/p_j)= (\Phi_j, g_j,q_j/p_j)$.

Let $\zeta\subset (\C^2,0)$ be a plane branch. By the projection formula
\ref{proj-for} one has $[\delta, \zeta ] = I_z(\gamma,\varphi^*\zeta)/k$ and so
$$
\frac{[\delta, \zeta ]}{I_0(\delta,x)}
= \frac{I_z(\gamma,\varphi^*\zeta)}{I_z(\gamma,f)}\; .
$$
This equalities applied to the branches $\zeta$ of $\lambda_i =\{y_i=0\}$ and both
pairs
$(\gamma,\delta)$, $(\gamma',\delta')$ implies that
the integer $\ell$ defined in Theorem \ref{mainth-plane} is charaterized as the
one described in the statement.

Now, the equality for the intersection multiplicity of $\delta$ and $\delta'$ given
in Theorem \ref{mainth-plane} is (by using the projection formula) the one described
in the statement of the Theorem.
\end{proof}

\begin{remark}
Using remark \ref{rmk_m}, the expression of the formula of Theorem
\ref{TheoremPrincipal} in terms of intersections multiplicities is the following one :
$$
[\delta, \delta ']= \frac{1}{k k'}\min  \left\{
\epsilon'_{\ell-1}
I_z(\gamma ,\varphi ^* (\varphi (\rho))),
\epsilon_{\ell-1}
I_z(\gamma ',\varphi ^* (\varphi (\rho))) \right\}.
$$
\end{remark}

\section{The discriminant case}

The results proved in \cite{PNS}
about the behaviour of the critical
locus  of the map $\varphi$ and its relation with the special
fibres of the pencil
$\Phi = \langle f,g \rangle$ (see also \cite{DM} for the
plane case) allows to determine the topology of the irreducible
components of the discriminant curve, $ D(\varphi)$, of the morphism
$\varphi$.
  We recall that the discriminant curve is the image by $\varphi$ of the
critical locus, $C(\varphi)$,
of the map $\varphi$ (we consider only the one-dimensional components
of the critical locus, so $C(\varphi)$ is the
topological closure of the vanishing
of the restriction of the jacobian determinant of $(f,g)$ to $(Z,z)\backslash
\{z\}$).
Moreover, the sequence of rational numbers
$q_i/p_i$, $i\ge 0$, for each branch of $C(\varphi)$ can be computed directly from
the pencils without the knowledge of the concrete branches of $C(\varphi)$ (see
\cite{PNS}). We will show here how we obtain the whole topology type of the
discriminant curve using only the construction of the pencils.

To make this paragraph self-contained, we recall here the construction of  $\cal S$
(the set of sequences $( B_i)_{i\geq 0}$ associated to each branch of the
discriminant curve) in a similar way to the one given in section 4 of
\cite{TopIm}.

\begin{setting}

Let $\pi: (X,E)\to (Z,z)$ be the minimal good resolution of $(Z,z)$ which is also a
resolution of the pencil
$\langle g,f \rangle$ and of the curve $\{fg=0\}$ (i.e. the minimal good resolution
of the singularity $(Z,z)$ such that
$(g/f)\circ \pi$ is a morphism  and the strict transform of
$\{fg=0\}$ is smooth and transversal to the exceptional locus, see \cite{PNS}).

An irreducible component $E_\alpha $ of the exceptional divisor $E$
is  called a {\it nodal} component
(called rupture component in
\cite{TopIm})
if either $E_\alpha$ is non-rational or 
the number of connected components of $\bar C_{red}\backslash E_\alpha$ is
at
least three, where $\bar C_{red}$ %{\color{red} why we use "red"? "RED" for "reduced"}
stands for the total transform  of $C= \{ fg=0\}$ by $\pi$ with reduced structure.
A {\it nodal zone} $R \subset E$ is a maximal connected union
of irreducible exceptional components containing at least one
nodal component and such that the ratio
$\frac{\nu_\alpha(g) }{ \nu_\alpha (f)}$ is constant for
$E_\alpha\subset R$. The term $r$-nodal zone is used to indicate that
$\frac{\nu_\alpha(g) }{ \nu_\alpha (f)}=r$.

Let ${\cal RZ}$ be the set of all nodal zones $R$ such that
$Q(R):=\frac{\nu_\alpha (g)}{\nu_\alpha (f)}\neq 1, \ E_\alpha
\subset R$. Let $E^{(1)}=\displaystyle\cup_{\alpha} E_\alpha
\subset E$ be the union of the irreducible components
$E_{\alpha}$ of
$E$ such that $\frac{\nu_\alpha (g)}{\nu_\alpha(f)}=1$.
Notice that the union $\cal D$ of dicritical
components of $E$ is included in $E^{(1)}$.
Let ${\cal A}$ be the set of
connected components of
$\overline{E^{(1)}\backslash {\cal D}}$ which contain a nodal component.
For
$A\in \cal A$ we define  $Q(A) :=\frac{\nu_\alpha (g)}{\nu_\alpha
(f)}=1$,
$E_\alpha \subset A$. Moreover let
${\cal P}_{\cal C}$ be the set of points  $P\in \cal D$ such
that either $P$ is a singular point of $\cal D$ or $P$  is a smooth
point of $\bar C_{red}$ which is a critical point of the map
$(g/f)\circ \pi_{|_{\cal D}}:  {\cal D}\longrightarrow \C\P^1$.
For $P\in {\cal P}_{\cal C}$ we put $Q(P):=1$.

%{\color{red}
%Is it true that any $A\in {\cal A}$ is 1-nodal component? I think that could be true,
%however I added the conditions on the definition of the set ${\cal A}$.
%}

\medskip
We denote ${\cal B}^0={\cal RZ}\cup {\cal A}\cup {\cal
P}_{\cal C}$
%{\color{red} why we need to add ${\cal A}$? I forget ... }
and fix $B\in {\cal B}^0$. We write
$Q(B)=\frac{q}{p} \in \Q$ with gcd$(p,q)=1$ and consider  the pencil
$\Phi _B =\langle  g^p, f^q\rangle$.
Notice that $\pi$ is also an embedded resolution of $\{
f^qg^p=0\}$.
Thus by theorem 3 of \cite{PNS} there exists a
unique special fibre $\xi _B=\{g^p -a f^q=0\}$ of  $\Phi _B$
whose strict transform $\tilde \xi _B$ by $\pi$ intersects $B$.

Let us denote
$\varphi_1=(f_1, g_1)=(f^q ,g^p -a f^q)$ and   $\sigma _B$ the minimal sequence of
blowing-ups of points such that $\pi_B:=\sigma_B\circ \pi$  is an embedded resolution
of $\{f_1g_1=0\}$.

One can construct the set  ${\cal B}^0 (\varphi _1)$ for
$\varphi _1$ in the same way we have defined ${\cal B}^0$ for
$(f,g)$ and for each $B^1\in {\cal B}^0(\varphi_1)$ we define $Q_1(B_1)$ as
$Q(B_1)$ with respect to
$g_1,f_1$. Moreover we add the symbol $B_\infty$ when there
exists a non reduced branch $r\zeta$ of $\{g_1=0\}$, $r>1$,
whose strict transform $\widetilde {r\zeta}$ by $\pi
_B$ intersects $\sigma _B^{-1}(B)$. When $B_\infty$ exists we put
$Q_1(B_\infty ):=\infty$.

Let us denote :
$$
{\cal B}^1_B = \{ B_1 \in {\cal B}^0(\varphi _1)  \ / \
\sigma _B (B^1)\subset B\}\cup \{ B_\infty\}
$$
and we set ${\cal B}^1= \displaystyle \cup _{B\in {\cal B}^0}{\cal B}^1_B$.
Moreover one has the map $\psi_1 : {\cal B}^1\longrightarrow
{\cal B}^0$ defined by $\psi_1(B_1)=B$ if and only if $B_1\in {\cal B}^1_B$.
Thus we can follow the same process for each element $B_1\in
{\cal B}^1$ with $B_1\neq B_\infty$ for any $ B\in {\cal B}^0$
and so we inductively construct the collection of sets
$\{ {\cal B}^i\}_{i\geq 0}$ together with maps
$\psi _i :  {\cal B}^i \longrightarrow {\cal B}^{i-1}$ and
$Q_i : {\cal B}^i\longrightarrow \Q\cup \{\infty\}$ in such a
way that the
definition of $\psi _i^{-1}(B)$ for $B \in  {\cal B}^{i-1}$
(and the map ${Q_i}_{| \psi _i^{-1}(B)}$) follows the same
process  described above for $B\in {\cal B}^0$.

In a such a way we obtain $\cal S$ the set of sequences $( B_i)_{i\geq 0}$ such
that $B_i\in {\cal B}^i$ and $\psi _i(B_i)=B_{i-1}$ for $i\geq
1$. We assume that if $B_i=(B_{i-1})_\infty$ then the sequence is
finite and ends at $B_i$.
For our purposes it is better to encode this information in the form of a
weighted oriented graph ${\cal T}$ in the obvious form. That is, one puts a vertex for each
$B\in{\cal B}^i$ ($i\ge 0$) with weight equal to
$Q_i(B)$. The set of oriented edges is the set of pairs
$(B_{i-1},B_i)\in {\cal B}^{i-1}\times {\cal B}^i$ such that
$\psi_i(B_i)=B_{i-1}$. Thus, the  graph ${\cal T}$ is the union of
$\#{\cal B}^0$ trees, each one with root the corresponding element (vertex) of
${\cal B}^0$. We will call the above defined graph the graph of
$\varphi$.
In this way, the set ${\cal S}$ corresponds to the maximal completely ordered (and
weighted) subtrees of ${\cal T}$.

\end{setting}

\begin{setting}[{\bf The branches of the discriminant}]

Let $\CC(\varphi)$ be the set of branches of $C(\varphi)$ which are not branches of
$fg=0$.
Let also denote now by $\Delta(\varphi)$ the set of branches of the discriminant
locus wich are the image of a branch of $\CC(\varphi)$, so the map $\varphi$ gives a
surjective map  $\varphi: \CC(\varphi)\to \Delta(\varphi)$.

As a consequence of the results of \cite{LMW}, \cite{Mi} and \cite{PNS} one
has that the set $\CC(\varphi)$ can be decomposed as
$\bigcup_{B\in {\cal B}^0} \JJ_B$, $\JJ_B\neq \emptyset$ for all $B\in \BB^0$, and
such that $\gamma\in  \JJ_B$ if and only if its strict transform by $\pi$ intersects
$B$. Moreover, for $\gamma\in \JJ_B$ one has
$I_z(g,\gamma)/I_z(f,\gamma)=Q(B)$. Note that $Q(B)=q/p$ (the rational number defined
in \ref{pencils}).

\begin{remark}
The decomposition described in \cite{PNS} relating the special values and
the branches of the critical locus $\CC(\varphi)$ is not the same as the one described here: in
fact it could happen that a special component (i.e. a connected component of
$E\setminus \DD$) can be decomposed in several different $r$-nodal zones.
On the other hand, the points of a divisorial component in $\PP_{\CC}$ offers a more
precise decomposition than the one given by the corresponding $1$-nodal zone described
in \cite{Mi}. In this sense the decomposition from the elements of $\BB$ is a mixture
of both decompositions.
\end{remark}

Now, by Theorem 4 and Corollary 3 in \cite{PNS}, the special fiber $\xi_B=\{g^p-a f^q=0\}$ of $\Phi=\langle g,f\rangle$ defined
above just coincides with the fiber $g_1$ defined in \ref{pencils}. Thus the pencil $\Phi_1=\langle f_1, g_1\rangle$ is
exactly the one corresponding to the map $\varphi_1 =(g_1,f_1)$. Moreover, the
critical locus $\CC(\varphi_1)$ is the same as $\CC(\varphi)$, the one of the map
$\varphi$ (this is a straighforward computation, see also the proof of Theorem 4 in
\cite{TopIm} or lemma 6.4 in \cite{GB-PP}).
That means that we can repeat the same process for the new map and so consider
$B_1\in \BB^{0}(\varphi_1)$ such that the strict transform of $\gamma$ by the
corresponding resolution map intersects $B_1$.

The above construction makes possible to define the map
$F: \CC(\varphi)\to \SS$ such that for
$\gamma\in \CC(\varphi)$, $F(\gamma)\in \SS$ is the unique sequence $(B_i)_{i\ge 0}$
such that the corresponding strict transform of $\gamma$ intersects $B_i$ for all
$i$. Note that the fact that $\JJ_{B_i}\neq \emptyset$ for all $i$ implies that the map $F$ is surjective.

\end{setting}

\begin{theorem}\label{Thm1-3}
Let $\gamma\in \CC(\varphi)$, $\delta = \varphi(\gamma)\in \Delta(\varphi)$ and 
let $F(\gamma)=(B_i)_{i\ge 0}$. Then
$(Q_i(B_i))_{i\ge 0}$ together with the integer $k$ such that
$\varphi_{*}\gamma= k \delta$
determines the semigroup
(and so the topological type) of the branch $\delta$.
\end{theorem}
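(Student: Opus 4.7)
The plan is to identify, for every index $i\ge 0$, the rational number $Q_i(B_i)$ with the quotient $q_i/p_i$ coming from the iterated sequence of pencils $\PP(f,g,\gamma)=\{(\Phi_i,g_i,q_i/p_i):i\ge 0\}$ defined in \ref{pencils} for the branch $\gamma$. Once this identification is established, the conclusion is immediate: the topological type of $\delta$ is determined by the sequence $(q_i/p_i)_{i\ge 0}$ via Corollary \ref{cor1}, and the full semigroup is recovered from $k$ and the ratios via Theorem \ref{thm1} and Remark \ref{rmk_m}.

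For the base case $i=0$: since $\gamma\in\JJ_{B_0}$, the strict transform of $\gamma$ by $\pi$ meets $B_0\in\BB^0$, and the equality $I_z(g,\gamma)/I_z(f,\gamma)=Q(B_0)$ recorded in the paragraph just preceding the theorem gives $Q_0(B_0)=q_0/p_0$. For the inductive step, the crucial input is the identification quoted from \cite{PNS} (Theorem~4 and Corollary~3): the special fibre $\xi_{B_0}=\{g^p-af^q=0\}$ of $\Phi_{B_0}=\langle g^p,f^q\rangle$ whose strict transform meets $B_0$ coincides with the distinguished fibre $g_1$ singled out in \ref{pencils}. Hence the auxiliary morphism $\varphi_1=(f_1,g_1)=(f^q,g^p-af^q)$ used to build $\BB^1$ is precisely the morphism associated to the pencil $\Phi_1$ of the iterated construction, and one has $\CC(\varphi_1)=\CC(\varphi)$, so $\gamma$ remains a branch of the critical locus of $\varphi_1$. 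Since $\gamma\in\JJ_{B_1}$ inside $\BB^0(\varphi_1)$, the base-case argument applied to $\varphi_1$ delivers $Q_1(B_1)=I_z(g_1,\gamma)/I_z(f_1,\gamma)=q_1/p_1$. Iterating, one gets $Q_i(B_i)=q_i/p_i$ for all $i\ge 0$, with the case $B_i=(B_{i-1})_\infty$ matching precisely the terminating case $I_z(g_i,\gamma)=\infty$ of $\PP(f,g,\gamma)$ (i.e.\ $\gamma$ becomes a branch of $\{g_i=0\}$).

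With this identification in hand, the semigroup generators of $\delta$ are extracted as in Theorem \ref{thm1} and Remark \ref{rmk_m}: from $k$ and $m_{-1}=I_z(f,\gamma)/k$ one obtains inductively $m_i=I_z(g_i,\gamma)/k$ via $I_z(g_i,\gamma)=(q_i/p_i)\,I_z(f_i,\gamma)$ and $I_z(f_i,\gamma)=I_z(f,\gamma)\prod_{j<i}q_j$, after which $\{m_{-1}\}\cup\{m_i:d_i>1\}$ generates the semigroup of $\delta$. The main obstacle will be the inductive identification $\xi_{B_0}=\{g_1=0\}$ together with the stability $\CC(\varphi_1)=\CC(\varphi)$; both rest on a careful comparison between the decomposition of $\CC(\varphi)$ by nodal zones and dicritical points (from \cite{PNS,LMW,Mi,DM}) and the iterated pencil construction of Section~1. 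Once these two ingredients are granted, the induction and the final appeal to Theorem \ref{thm1} are essentially book-keeping.
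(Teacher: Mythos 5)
Your proposal is correct and follows essentially the same route as the paper: re-derive the identification $Q_i(B_i)=q_i/p_i$ (which the paper establishes in the Setting just before the theorem, using \cite{PNS} and the invariance $\CC(\varphi_1)=\CC(\varphi)$), then invoke Theorem~\ref{thm1}, Corollary~\ref{cor1} and Remark~\ref{rmk_m}. One small slip worth fixing: for $i\geq 1$ one has $m_i\neq I_z(g_i,\gamma)/k$ in general --- it is $\mu_i=I_z(g_i,\gamma)$ that feeds into the recursion $m_{i+1}=d_i m_i+\mu_{i+1}/k-p_i\mu_i/k$ of Remark~\ref{rmk_m}, which is precisely what the paper's proof unwinds explicitly in terms of the ratios $r_i=Q_i(B_i)$ and $\mu_{-1}$.
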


\begin{proof}
One needs to compute the sequence $\{m_i: i\ge -1\}$ described in Theorem \ref{thm1}.
For $i\geq 0$ let $r_i=q_i/p_i = Q_i(B_i)$, $\gcd(q_i,p_i)=1$.
As $r_i=q_i/p_i= I_z(g_i,\gamma)/I_z(f_i,\gamma)$ and
$f_i=f_{i-1}^{q_{i-1}}=f^{q_0\ldots q_{i-1}}$ we deduce :
$$
I_z(g_i,\gamma)=r_i I_z(f_i,\gamma) = r_iq_{i-1}I_z(f_{i-1},\gamma)=
(\pr_{k=0}^{i-1} q_k ) r_iI_z(f,\gamma)\; .
$$

Moreover, using notations of remark \ref{rmk_m}, $\mu_{-1}=I_z(f,\gamma)$ and for
$i\ge 0$:
$$
\mu_i=  I_z(g_i,\gamma)= I_z(\varphi^* y_i,\gamma)=
I_0(y_i,\varphi_*\gamma)= k I_0( y_i,\delta)\; .
$$
Thus
$$
\mu_i/k = I_0(y_i, \delta) =  I_z(g_i,\gamma)/k
= ( \pr_{k=0}^{i-1} q_k) r_iI_z(f,\gamma) / k
=(\pr_{k=0}^{i-1} q_k ) r_i\mu_{-1} /k\;.
$$
So for  the elements $m_i, i\geq -1 $  of the semigroup of $\delta$ we obtain:
\begin{align*}
m_{i+1} &= d_i m_i+\frac{\mu_{i+1}}{k}-p_i\frac{\mu_i}{k}
=  %\frac{e_{i-1}}{e_i}
d_i m_i+
(\pr_{k=0}^{i} q_k ) r_{i+1} \frac{\mu_{-1}}{k}-p_i
(\pr_{k=0}^{i-1} q_k ) r_{i}\frac{ \mu_{-1}}{k} \\
&=
%\frac{e_{i-1}}{e_i}
d_i m_i+(r_{i+1}-1)( \pr_{k=0}^{i}
q_k ) \frac{\mu_{-1}}{k}\; .
\end{align*}
(As in Remark \ref{rmk_m} $\epsilon_i=\gcd(\seq{\mu}{-1}i)$ and
$d_i=\epsilon_{i-1}/\epsilon_i$.)  
\end{proof}

\begin{remark}\label{rmk-critical}
Let $\SS$ be the set of maximal completely ordered subtrees of $\TT$. 
Then $\CC(\varphi) = \bigcup_{S\in \SS} F^{-1}(S)$ is a partition of the set of
branches $\CC(\varphi)$. It is clear that, if $\gamma, \gamma'\in F^{-1}(S)$ for
some $S\in \SS$, then $\varphi(\gamma)$ and $\varphi(\gamma')$ are equisingular: both have
the same semigroup. Notice that $F^{-1}(S)\neq \emptyset$ for any $S\in \SS$,
however it could happen that $\# F^{-1}(S) >1$ for some $S$.

Moreover, if $F(\gamma)=F(\gamma')$ one has that the iterated sequences of pencils
$\PP(f,g,\gamma) = \{(\Phi_i,g_i,q_i/p_i) : i\ge 0\}$
and
$\PP(f,g,\gamma') = \{(\Phi'_i,g'_i,q'_i/p'_i) : i\ge 0\}$ are the same but not at
the inverse, i.e. it could happen that $\PP(f,g,\gamma)=\PP(f,g,\gamma')$ but
$F(\gamma)\neq F(\gamma')$. A key point when $\gamma, \gamma'\in \CC(\varphi)$ is
that the iterated sequences of pencils can be determined without the previous
knowledge of the branches $\gamma$ and $\gamma'$ provided they are from the
critical locus.

For each $S = (B_i)_{i\ge 0} \in \SS$ let $Q(S)=(Q_i(B_i))_{i\ge 0}$. Theorem above implies that the set of sequences $\{Q(S) : S\in \SS\}$ theoretically permits to recover the set of topological types (the semigroups) of the branches of $\Delta(\varphi)$ (see the end of Remark \ref{rmk_m}). So in this sense both sets are equivalent. 
\end{remark}

\begin{setting}[{\bf The intersection multiplicity}]

Let $\gamma, \gamma' \in \CC(\varphi)$ and let
$\varphi_*(\gamma)=k \delta$,  $\varphi_*(\gamma')= k'\delta'$.
Assume that $\delta\neq \delta'$. The computation of the intersection multiplicity
$[\delta,\delta']$
of $\delta$ and $\delta'$ can be made as in the Theorem \ref{TheoremPrincipal}
and so it depends of a more detailled knowledge of the branches of the fibers.
\end{setting}

\begin{theorem}
Let $\ell$ be the smallest integer such that
there exists a branch $\rho$ of the fibre $\xi = \{g_\ell =0\}$ of
$\Phi_{\ell}$ such that
%$\displaystyle\frac{I_z(\gamma , \varphi ^* (\varphi (\rho)))}{k \; e_{\ell-1}}\neq
%\displaystyle\frac{I_z(\gamma ', \varphi ^* (\varphi (\rho)))}{k'\; e'_{\ell-1}}$.
$$
\displaystyle\frac{I_z(\gamma , \varphi ^* (\varphi (\rho)))}{I_z(\gamma,f)}\neq
\displaystyle\frac{I_z(\gamma ', \varphi ^* (\varphi (\rho)))}{I_z(\gamma',f)}\; .
$$
Then:
$$
[\delta, \delta ']= min \left\{
  \displaystyle\frac{e_{\ell-1}'I_z(\gamma ,
\varphi ^* (\varphi (\rho)))}{k} ,
\displaystyle\frac{e_{\ell-1}I_z(\gamma ',
\varphi ^* (\varphi (\rho)))}{k'}
\right\}\; .
$$
\end{theorem}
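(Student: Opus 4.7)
The plan is to observe that the statement is identical in form to Theorem \ref{TheoremPrincipal}; the proof consists in applying that result to the pair of branches $\gamma,\gamma'\in\CC(\varphi)$. Since $\varphi(\gamma)=\delta\neq\delta'=\varphi(\gamma')$ with $\varphi_{*}\gamma = k\delta$, $\varphi_{*}\gamma'=k'\delta'$, and the iterated sequences of pencils $\PP(f,g,\gamma)=\{(\Phi_i,g_i,q_i/p_i):i\ge 0\}$ and $\PP(f,g,\gamma')=\{(\Phi'_i,g'_i,q'_i/p'_i):i\ge 0\}$ are well-defined w.r.t.\ $\varphi$, Theorem \ref{TheoremPrincipal} applies verbatim and produces the claimed formula for $[\delta,\delta']$.

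The substantive point of placing the theorem in the discriminant section is that, for branches of $\CC(\varphi)$, all the inputs to the formula are accessible directly from the graph $\TT$. First, by Theorem~4 and Corollary~3 of \cite{PNS} (recalled just before Theorem \ref{Thm1-3}), for $\gamma\in\JJ_B\subset\CC(\varphi)$ the special fibre $\xi_B=\{g^p-af^q=0\}$ of $\Phi$ coincides with the fibre $g_1$ of \ref{pencils}, so the pencil $\Phi_1=\langle f_1,g_1\rangle$ is the one arising from the new map $\varphi_1=(f_1,g_1)$; iterating, $q_i/p_i = Q_i(B_i)$ whenever $F(\gamma)=(B_i)_{i\ge 0}$, and analogously for $\gamma'$. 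In particular $\Phi_i=\Phi'_i$ precisely while $B_j=B'_j$ for $j\le i$, so the comparison ``smallest $\ell$ for which some branch $\rho$ of $\{g_\ell=0\}$ separates $\gamma$ and $\gamma'$'' is intrinsic to the graph.

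Next, the auxiliary integers $k,k',e_{\ell-1},e'_{\ell-1}$ appearing in the formula can be read from these pencils: by Remark \ref{rmk_m} and the computation in the proof of Theorem \ref{Thm1-3}, $\mu_i=I_z(g_i,\gamma)$ satisfies $\mu_i/k = \mu_{-1}/k\cdot r_i\prod_{j<i}q_j$ with $r_i=Q_i(B_i)$, and $\epsilon_i=\gcd(\mu_{-1},\dots,\mu_i)$, $e_{\ell-1}=\epsilon_{\ell-1}/k$; the analogous formulas give $e'_{\ell-1}$. The intersection multiplicities $I_z(\gamma,\varphi^{*}(\varphi(\rho)))$ entering the statement are then evaluated by applying Lemma \ref{lemmaInterDiv} to the intersection of $\widetilde\gamma$ with the relevant dicritical component, which is the component of the graph $\TT$ that $\gamma$ points to at level $\ell$.

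The only step that requires any care — the potential obstacle — is making sure the branch $\rho$ used in the comparison lives in the fibre of the common pencil $\Phi_\ell$ at the correct level, i.e.\ that the ``first level at which branches of $\{g_\ell=0\}$ distinguish $\gamma$ from $\gamma'$'' in the surface coincides, under $\varphi^{*}$, with the analogous integer $\ell$ from Theorem \ref{mainth-plane} for the plane pair $(\delta,\delta')$. This is exactly the content of the projection formula calculation carried out in the proof of Theorem \ref{TheoremPrincipal}: the ratios $[\delta,\zeta]/I_0(\delta,x)$ equal $I_z(\gamma,\varphi^{*}\zeta)/I_z(\gamma,f)$ for every plane branch $\zeta$, so the two definitions of $\ell$ coincide and the formula transfers. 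No further argument is needed.
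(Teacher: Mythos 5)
Your proposal is correct and takes the same approach as the paper: the paper gives no separate proof for this theorem, treating it (as announced in the preceding setting paragraph) as a direct application of Theorem \ref{TheoremPrincipal} to the pair $\gamma,\gamma'\in\CC(\varphi)$. Your additional remarks on why the ingredients ($q_i/p_i=Q_i(B_i)$, $e_{\ell-1}$, etc.) are intrinsic to the graph $\TT$ for branches of the critical locus correctly capture the point of restating the result in the discriminant section, in the spirit of Remark \ref{rmk-critical}.
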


\begin{remark}
Let $\gamma, \gamma'\in \CC(\varphi)$ be such that
$F(\gamma)=(B_i)_{i\ge 0} \neq F(\gamma')=(B'_i)_{i\ge 0}$ and let
$\ell$ be such that
$B_i=B'_i$ for $i< \ell$ and
$B_\ell\neq B'_\ell$. In this case there exists a branch
$\rho$ of $\xi = \{g_\ell=0\}$ such that its strict transform
intersects $B_{\ell}$ and does not $B'_\ell$. For such a branch one has that
$$
\displaystyle\frac{I_z(\gamma , \varphi ^* (\varphi (\rho)))}{I_z(\gamma,f)} >
\displaystyle\frac{I_z(\gamma ', \varphi ^* (\varphi (\rho)))}{I_z(\gamma',f)}\; .
$$
So, in this case one has that
$$
[\delta, \delta ']= \displaystyle\frac{e_{\ell-1}I_z(\gamma ',
\varphi ^*(\varphi (\rho)))}{k'}
\; .
$$

\end{remark}

%%%%%%%%%%%%%%%%%%%%%%%%%%%%%%%%%%%%%%%%%%%

\end{document}